\documentclass[11pt,leqeq]{article}
\usepackage{amssymb,amsthm}
\usepackage{amsmath}
\usepackage{amscd}
\usepackage{xy}
\xyoption{all}
\usepackage[dvips]{graphicx}

 %-------------------------------------------------------------------
\font\tenfrak=eufm10  \font\sevenfrak=eufm7  \font\fivefrak=eufm5
\newfam\frakfam
\textfont\frakfam=\tenfrak
\scriptfont\frakfam=\sevenfrak
\scriptscriptfont\frakfam=\fivefrak

%-----------------------------------------------------------------
\font\tengoth=eufm10 scaled\magstep1 \font\sevengoth=eufm7 \font\fivegoth=eufm5
\newfam\gothfam
\textfont\gothfam=\tengoth\scriptfont\gothfam=\sevengoth
  \scriptscriptfont\gothfam=\fivegoth
\def\goth{\fam\gothfam}    %% Euler Fraktur (math mode only)
%-------------------------------------------------------------------

%--------------------------------------------------------------------
\setlength{\topmargin}{-0.3in} \setlength{\oddsidemargin}{0.2in}

\setlength{\textwidth}{6.25in}

\setlength{\textheight}{8.5in}

\setlength{\unitlength}{0.6pt}

\newtheorem{thm}{Theorem}%[section] (If you want theorem numbered

%               with section number.  Same

\newtheorem{cor}[thm]{Corollary}%       goes for lemmas, etc.)

 %--> \begin\end{theorem,lemma,...}

\newtheorem{defn}[thm]{Definition}

\newtheorem{exmp}[thm]{Example}

\begin{document}

\setlength{\baselineskip}{16pt}
\date{}

\title{On the $(3,N)$ Maurer-Cartan equation}

\author{Mauricio Angel, Jaime Camacaro and Rafael D\'\i az}

\maketitle

\begin{abstract}
\noindent
Deformations of the $3$-differential of $3$-differential graded
algebras are controlled by the $(3,N)$ Maurer-Cartan equation. We
find explicit formulae for the coefficients appearing in that
equation, introduce new geometric examples of $N$-differential
graded algebras, and use these results to study $N$ Lie
algebroids.
\end{abstract}
\noindent AMS Subject Classification: \ 53B99, 18G99, 18G99.\\
\noindent Keywords:\ \ Lie algebroids, $N$-complexes, Higher
differentials.

\section{Introduction}

In this work we study deformations of the $N$-differential of a
$N$-differential graded algebra. According to Kapranov
\cite{Kap}  and Mayer \cite{Ma, {Ma2}} a $N$-complex over a field $k$ is a
$\mathbb{Z}$-graded $k$-vector space $V= \bigoplus_{n \in
\mathbb{Z}}V_n$ together with a degree one linear map $d:V \longrightarrow V$
such that $d^N=0$.  Remarkably, there are at least two
generalizations of the notion of differential graded algebras to
the context of $N$-complexes. A choice, introduced first by Kerner
in \cite{KN3, KN} and further studied by Dubois-Violette
\cite{DV1, DV2} and Kapranov \cite{Kap}, is to fix a primitive $N$-th root of unity $q$
and define a $q$-differential graded algebra $A$ to be a
$\mathbb{Z}$-graded  associative algebra together with a linear
operator $d:A\longrightarrow A$ of degree one such that
$d(ab)=d(a)b+q^{\bar{a}}ad(b)$ and $d^N=0$. There are several
interesting examples and constructions of $q$-differential graded
algebras \cite{AK1, AK2, AD3, Ba, Ba2, DV3, DVK, KaWa, KN}.

We work within the framework of  $N$-differential graded algebras
($N$-dga) introduced in \cite{AD}. This notion does not depend on
the choice of a $N$-th primitive root of unity, and thus it is
better adapted for differential geometric applications. A
$N$-differential graded algebra $A$ consist of a
$\mathbb{Z}$-graded associative algebra $A=\bigoplus_{n \in
\mathbb{Z}}A_n$ together with a degree one linear map $d:A
\longrightarrow A$ such that $d^N=0$ and $d(ab)=d(a)b+(-1)^{\bar{a}}ad(b)$ for
$a,b \in A$. The main question regarding this definition is
whether there are interesting examples of $N$-differential graded
algebras. Much work still needs to be done, but already a variety
of examples has been constructed in
\cite{AD, AD2}. These examples may be classified as
follows:

\begin{itemize}

\item{Deformations of $2$-dga into $N$-dga. This is the simplest and most direct way to cons\-truct
$N$-differential graded algebras. Take a differential graded
algebra $A$ with differential $d$ and consider the deformed
derivation $d + e$ where $e:A \longrightarrow A$ is a degree one
derivation. It is possible to write down explicitly the equations
that determine under which conditions $d+e$ is a $N$-differential,
and thus turns $A$ into a $N$-differential graded algebra. In
other words one can explicitly write down the condition $(d+
e)^N=0$. }

\item{$N$ flat connections. Let $E$ be a vector bundle over a manifold $M$ provided with a flat connection
$\partial_E$. Differential forms on $M$ with values in $End(E)$
form a differential graded algebra.  An $End(E)$-valued one form
$T$ determines a deformation of this algebra into a
$N$-differential graded algebra with differential of the form
$\partial_E + [T,\ \ ]$ if and only if $T$ is a $N$-flat
connection, i.e., the curvature of $T$ is $N$-nilpotent. }

\item{Differential forms of depth $N \geq 3$. Attached to each affine
manifold $M$ there is a $(dim(M)(N-1) + 1)$-differential graded
algebra $\Omega_{N}(M),$  called de algebra of diffe\-ren\-tial
forms of depth $N$ on $M$, constructed as the usual differential
forms allowing higher order differentials, i.e., for affine
coordinates $x_i$ on $M$, there are higher order differentials
$d^{j}x_i$  for $1 \leq j\leq N-1$.}
\item{Deformations of $N$-differential graded algebras into  $M$-differential graded algebras.
If we are given a $N$-differential graded algebra $A$ with
differential $d$, one can study under which condition a deformed
derivation $d + e$, where $e$ is a degree one derivation of $A$,
turns $A$ into a $M$-differential graded algebra, i.e., one can
determine conditions ensuring that $(d + e)^M=0$.  In
\cite{AD} we showed that $e$ must satisfy
a system of non-linear equations, which we  called the $(N,M)$
Maurer-Cartan equation.}
\item{Algebras $A^{N}_{\infty}$. This is not so much an example of $N$-differential graded
algebras but rather a homotopy generalization of such notion.
$A^{N}_{\infty}$ algebras are studied in \cite{AD4}.}

\end{itemize}

This paper has three main goals.  One is to introduce geometric
examples of $N$-~diffe\-ren\-tial graded algebras.  We first
review the constructions of $N$-differential graded algebras
outlined above and then proceed to consider the new examples:

\begin{itemize}
\item Differential forms on finitely generated simplicial
sets. We construct a contravariant functor
$\Omega_{N}:\mathrm{set}^{\Delta^{op}} \longrightarrow N^{il}dga$
from the category of simplicial sets generated in finite
dimensions to $N^{il}dga$, the category of nilpotent differential
graded algebras, i.e., $N$-differential graded algebras for some
$N \geq 1$. For a simplicial set $s$ we let $\Omega_{N}(s)$ be the
algebra of algebraic differential forms of depth $N$ on the
algebro-geometric realization of $s$. For each integer $K$ we
define  functor $Sing_{\leq K}: Top \longrightarrow
\mathrm{set}^{\Delta^{op}}$, thus we obtain
contravariant functors $\Omega_{N}\circ Sing_{\leq K}: Top
\longrightarrow N^{il}dga$ assigning to each topological space $X$
a nil-differential graded algebra.

\item Difference forms on finitely generated simplicial sets.
We construct a contravariant functor $D_{N}$ defined on
$\mathrm{set}^{\Delta^{op}}$ with values in a category whose
objects are graded algebras which are also $N$-complexes for some
$N$, with the $N$-differential satisfying a twisted Leibnitz rule.
For a simplicial set $s$ we let $D_{N}(s)$ be the algebra of
difference forms of depth $N$ on the integral lattice in the
algebro-geometric realization of $s$. Again, for each integer
$K\geq 0$ we obtain a functor $D_{N}\circ Sing_{\leq K}$ defined
on $Top$ assigning to each topological space $X$ a twisted
nil-differential graded algebra.
\end{itemize}

Our second goal is to study the construction of $N$-differential
graded algebras as deformations of $3$-differential graded
algebras. Although in \cite{AD} a general theory solving this sort
of problem was proposed, our aim here is to provided a solution as
explicit as possible. We consider exact and infinitesimal
deformations of $3$-differentials in Section \ref{m0}.

Our final goal in this work is to find applications of
$N$-differential graded algebras to Lie algebroids. In Section
\ref{j} we review the concept of Lie algebroids introduced by
Pradines \cite{pra}, which generalizes both Lie algebras and
tangent bundles of manifolds.  A Lie algebroid $E$ may be defined
as a vector bundle together with a degree one differential $d$ on
$\Gamma(\bigwedge E^{*}).$  We generalize this notion to the world
of $N$-complexes, that is we introduce  the concept of $N$ Lie
algebroids and construct several examples of such objects.

\section{Examples of N-differential graded algebras}\label{r}

In this section we give a brief summary of the known examples of
$N$-dgas and introduce new examples of $N$-dgas of geometric
nature.

\begin{defn}{\em
 Let $N\geq 1$ be an integer. A $N$-complex is a pair $(A,
d)$, where $A$ is a $\mathbb{Z}$-graded vector space and  $d:A
\longrightarrow A$ is a degree one linear map such that $d^N=0$.
}
\end{defn}

Clearly a $N$-complex is also a $M$-complex for  $M\geq N$.
$N$-complexes are also referred to as $N$-differential graded
vector spaces. A $N$-complex $(A,d)$ such that $d^{N-1}\neq 0$ is
said to be a proper $N$-complex. Let $(A,d)$ be a $N$-complex and
$(B,d)$ be a $M$-complex, a morphism $f:(A,d)\longrightarrow
(B,d)$ is a linear map $f:A\longrightarrow B$ such that $df=fd$.
One of the most interesting features of $N$-complexes is that they
carry cohomological information. Let $(A,d)$ be a $N$-complex,
$a\in A^i$ is $p$-closed if $d^p(a)=0$, and is $p$-exact if there
exists $b\in A^{i-N+p}$ such that $d^{N-p}(b)=a$, for $1\leq p<N$.
The cohomology groups of $(A,d)$ are the spaces
\begin{equation*}
_pH^{i}(A)={Ker\{d^{p}:A^{i}\longrightarrow A^{i+p}\}} /
{Im\{d^{N-p}:A^{i-N+p}\longrightarrow A^{i}\}},
\end{equation*}
for $i\in\mathbb{Z}$ and $p=1,2,...,N\!-\!1$.

\begin{defn}\label{Ndga}{\em
 A $N$-differential graded algebra ($N$-dga) over a  field
$k$, is a triple $(A,m,d)$ where $m:A\otimes A\longrightarrow A$
and $d:A\longrightarrow A$ are linear maps such that:
\begin{enumerate}
\item $d^{N}=0$, i.e., $(A,d)$ is a $N$-complex.

\item $(A,m)$ is a graded associative algebra.

\item $d$ satisfies the graded Leibnitz rule
$d(ab)=d(a)b+(-1)^{\bar{a}}ad(b)$.

\end{enumerate}
}
\end{defn}

The simplest way to obtain $N$-differential graded algebras is
deforming differential graded algebras.  Let $Der(A)$ be the Lie
algebra of derivations on a graded algebra $A$. Recall that a
degree one derivation $d$ on $A$, induces a degree one derivation,
also denoted by $d$, on $End(A).$  Let $A$ be a $2$-dga and $e \in
Der(A)$. It is shown in \cite{AD} that $e$ defines a deformation
of $A$ into a $N$-differential graded algebra if and only if
$(d+e)^N=0$, or equivalently, if and only if the curvature
$F_e=d(e) + e^2$ of $e$ satisfies $(F_e)^{\frac{N}{2}}=0$ if  $N$
is even, or $(F_e)^{\frac{N-1}{2}}(d+e)=0$ if $N$ is odd. For
example, consider the trivial bundle $M
\times \mathbb{R}^n$ over $M$. A connection on $M \times
\mathbb{R}^n$ is a $gl(n)$-valued one form $a$ on $M$, and its
curvature is  $F_a=da +
\frac{1}{2}[a,a]$. Let $\Omega(M,gl(n))$ be the graded algebra of
$gl(n)$-valued forms on $M$. Thus the pair $(\Omega(M,gl(n)), d +
[a, \ \ ])$ defines a $N$-dga if and only if
$(F_a)^{\frac{N}{2}}=0$ for $N$ even, or
$(F_a)^{\frac{N-1}{2}}(d+a)=0$ for $N$ odd.

\subsection*{Differential forms of depth N on simplicial sets}\label{r2}

Fix an integer $N \geq 3$. We are going to construct the
$(n(N-1)+1)$-differential graded algebra $\Omega_N(\mathbb{R}^n)$
of algebraic differential forms of depth $N$ on $\mathbb{R}^n$.
Let $x_1,...,x_n$ be coordinates on $\mathbb{R}^n$, and for $0\leq
i
\leq n$ and $0\leq j < N,$ let $d^jx_i$ be a variable of degree
$j$. We identify $d^0x_i$ with $x_i.$

\begin{defn}{\em
The $(n(N-1)+1)$-differential graded algebra
$\Omega_N(\mathbb{R}^n)$ is given by
\begin{itemize}
\item $\Omega_N(\mathbb{R}^n) = \mathbb{R}[d^jx_i] / \left\langle  d^jx_i d^kx_i \mid
j,k \geq 1 \right\rangle $ as a graded algebras.

\item The $(n(N-1)+1)$-differential $d:\Omega_N(\mathbb{R}^n)
\longrightarrow \Omega_N(\mathbb{R}^n)$ is given
by $d(d^jx_i)=d^{j+1}x_i$, for $0 \leq j \leq N-2,$ and
$d(d^{N-1}x_i)=0$.
\end{itemize}
}
\end{defn}
One can show that  $d$ is $(n(N-1)+1)$-differential as follows:
\begin{enumerate}
\item It is easy to check that $\Omega_N(\mathbb{R})$ is a $N$-dga.

\item If $A$ is a $N$-dga and $B$ is a $P$-dga, then $A
\otimes B$ is a $(N+P-1)$-dga.

\item $\Omega_N(\mathbb{R}^n)=
\Omega_N(\mathbb{R})^{\otimes n} $
\end{enumerate}

We often write $\Omega_N(x_1,...,x_n)$ instead of
$\Omega_N(\mathbb{R}^n)$ to indicate that a choice of affine
coordinates $(x_1,...,x_n)$ on $\mathbb{R}^n$ has been made.

Let $\Delta$ be the category such that its objects are
non-negative integers; morphisms in $\Delta(n,m)$ are order
preserving maps $f:\{0,...,n\} \longrightarrow
\{0,...,m\}$. The category of simplicial sets
$\mathrm{Set}^{\Delta^{op}}$ is the category of contravariant
functors $\Delta \longrightarrow
\mathrm{Set}$. Explicitly, a simplicial set $s: \Delta^{op}
\longrightarrow \mathrm{Set}$ is a functorial correspondence
assigning:
\begin{itemize}
\item A set $s_n$ for each integer $n \geq 0$. Elements of $s_n$ are called simplices of dimension
$n$.
\item A map $s(f): s_m \longrightarrow s_n$ for each $f \in \Delta(n,m)$.
\end{itemize}

Let $\mathrm{Aff}$ be the category of affine varieties, and let
$A:\Delta \longrightarrow \mathrm{Aff}$ be the functor sending
$n\geq 0$, into the affine variety $A(x_0,...,x_n)=
\Delta_n = \{(x_0,...,x_n)
\in
\mathbb{R}^n \mid x_0+...+x_n = 1 \}.$ $A$ sends $f \in
\Delta(n,m)$ into $A(f):A(x_0,...,x_n)
\rightarrow A(x_0,...,x_m)$ given by $A(f)^{*}(x_j) ~=~
\sum_{f(i)=j}x_i$, for $0
\leq j \leq m$. Forms of depth $N$ on the  cosimplicial affine variety
$A$ are defined by  the functor  $\Omega_N:
\Delta^{op} \longrightarrow  N^{il}\mbox{dga}$ sending $n \geq 0$ into
\begin{equation*}
\Omega_N(n)=\Omega_N(x_0,...,x_n)/ \left\langle x_0+...+x_n-1, \ \ dx_0+...+dx_n \right\rangle .
\end{equation*}
A map $f \in \Delta(n,m)$ induces a morphisms
$\Omega_N(f):\Omega_N(m)
\longrightarrow \Omega_N(n)$ given for  $0 \leq j \leq m$ by
\begin{equation*}
\Omega_N(f)(x_j)= \sum_{f(i)=j}x_i \mbox{\ \ and \ \ }
\Omega_N(f)(dx_j)=
\sum_{f(i)=j}dx_i.
\end{equation*}

Let $\mathrm{set}^{\Delta^{op}}$ be the full subcategory of
$\mathrm{Set}^{\Delta^{op}}$ whose objects are simplicial sets
generated in finite dimensions, i.e., simplicial sets $s$ for
which there is an integer $K$ such that for each $p \in s_i$, $i
\geq K$, there exists $q
\in s_j$, $j \leq K$, with
$p=s(f)(q)$ for some $f \in \Delta(p,q).$ We are ready to define
the contravariant functor $\Omega_N:
\mathrm{set}^{\Delta^{op}}
\longrightarrow N^{il}\mbox{dga}$ announced in the introduction.  The
nil-differential graded algebra
$\Omega_N(s)=\bigoplus_{i=0}^{\infty}\Omega_N^{i}(s)$ associated
with $s$ is given by
\begin{equation*}
\Omega_N^{i}(s)= \{a \in
\prod_{n=0}^{\infty}\prod_{p \in s_n}\Omega_N^{i}(n)
\mid a_{s(f)(p)}= \Omega_N(f)(a_p) \mbox{ for } p \in s_m \mbox{ and } f \in \Delta(n,m)  \}.
\end{equation*}
A natural transformation $l:s \longrightarrow t$ induces a map
$\Omega_N(l):\Omega_N(t) \longrightarrow \Omega_N(s)$ given by the
rule $[\Omega_N(l)(a)]_p =a_{l(p)}$ for $a \in \Omega_N(t)$ and $p
\in s_n.$

For each integer $K \geq 0$ there is functor $(\ \ )_{\leq
K}:\mathrm{Set}^{\Delta^{op}} \longrightarrow
\mathrm{set}^{\Delta^{op}}$ sending a simplicial set $s$, into the
simplicial set $s_{\leq K}$ generated by simplices in $s$ of
dimension lesser or equal to $K$. The singular functor $Sing: Top
\longrightarrow \mathrm{Set}^{\Delta^{op}}$ sends a topological
space $X$ into the simplicial set $Sing(X)$ such that
\begin{equation*}
Sing_n(X)=
\{f: \Delta_n \longrightarrow X \mid f \mbox{ is continous }\}.
\end{equation*}

Thus, for each pair of integers $N$ and $K$ we have constructed a
functor
\begin{equation*}
\Omega_N
\circ (\ \ )_{\leq K} \circ Sing: Top
\longrightarrow N^{il}\mbox{dga}
\end{equation*}
 sending a topological space $X$
into the nil-differential graded algebra $\Omega_N(Sing_{\leq
K}(X) ).$

\subsection*{Difference forms of depth N on simplicial sets}\label{r3}

Next we construct difference forms of higher depth on finitely
generated simplicial sets. Difference forms on discrete affine
space were introduced by Zeilberger in \cite{Zei}. We proceed to
construct a discrete analogue of the functors from topological
spaces to nil-differential graded algebras introduced above.
First, we construct $D_N(\mathbb{Z}^n)$ the algebra of difference
forms of depth $N$ on $\mathbb{Z}^n.$ Let
$F(\mathbb{Z}^n,\mathbb{R})$ be the algebra concentrated in degree
zero of $\mathbb{R}$-valued functions on the lattice
$\mathbb{Z}^n$. Introduce variables $\delta^jm_i$ of degree $j$
for $1\leq i \leq n$ and $1
\leq j < N$. The graded algebra of difference forms of depth $N$ on
$\mathbb{Z}^n$ is given by
\begin{equation*}
D_N(\mathbb{Z}^n)=F(\mathbb{Z}^n,\mathbb{R})\otimes
\mathbb{R}[\delta^jm_i] / \left\langle \delta^jm_i \delta^km_i \mid j,k \geq 1 \right\rangle
.
\end{equation*}

A form $\omega \in D_N(\mathbb{Z}^n)$ can be written as $\omega =
\sum_{I}\omega_I dm_I$ where $I:\{1,..,n\} \longrightarrow
\mathbb{N}$ is any map and $dm_I=\prod_{i=1}^{n}d^{I(i)}m_i.$ The degree
of $dm_I$ is $|I|=\sum_{i=1}^{n}I(i)$. The finite difference
$\Delta_i(g)$ of $g
\in F(\mathbb{Z}^n,\mathbb{R})$ along the $i$-direction is given
by
\begin{equation*}
\Delta_i(g)(m)=g(m+e_i) -g(m),
\end{equation*}
where the vectors $e_i$ are the canonical generators of
$\mathbb{Z}^n$ and $m
\in \mathbb{Z}^n$. The difference operator $\delta$ is defined for $1\leq j \leq N - 2$ by the rules
\begin{equation*}
\delta(g)= \sum_{i=1}^{n}\Delta_i(g)\delta m_i,\,\,\,
\delta(\delta^{j}m_i)=\delta^{j+1}m_i \,\,\,
\mathrm{ and }\,\,\, \delta(\delta^{N-1}m_i)=0.
\end{equation*}
 It is not difficult to check that
if $\omega =
\sum_{I}\omega_I dm_I$, then $\delta \omega =
\sum_{J}(\delta\omega)_J dm_J$ where
\begin{equation*}
(\delta \omega)_J=\sum_{J(i)=1}(-1)^{|J_{<i}|}\Delta_i
\omega_{J-e_i} + \sum_{J(i) \geq 2}(-1)^{|J_{<i}|}\omega_{J-e_i}.
\end{equation*}
 From the later formula we see that $(\delta \omega)_J$ is a linear
combination of (differences of) functions $\omega_K$ with $|K| <
|J|$. This fact implies that $\delta$ is nilpotent, indeed, one
can check that $\delta^{n(N-1)+1}=0$. All together we have proved
the following result.

\begin{thm}{\em
 $D_N(\mathbb{Z}^n)$ is a graded algebra and the difference
operator $\delta$ gives $D_N(\mathbb{Z}^n)$ the structure of a
$\left( n(N-1)+1 \right) $-complex.}
\end{thm}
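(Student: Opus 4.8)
The plan is to establish the two claims separately: that $D_N(\mathbb{Z}^n)$ is a graded algebra, and that the degree-one operator $\delta$ satisfies $\delta^{n(N-1)+1}=0$. For the first claim I would begin with the free graded-commutative algebra $F(\mathbb{Z}^n,\mathbb{R})\otimes\mathbb{R}[\delta^jm_i]$, placing $F(\mathbb{Z}^n,\mathbb{R})$ in degree zero and declaring $\deg(\delta^jm_i)=j$ for $1\leq j\leq N-1$. Each defining generator $\delta^jm_i\,\delta^km_i$ is homogeneous of degree $j+k$, so the ideal it generates is graded and the quotient $D_N(\mathbb{Z}^n)$ is a graded associative algebra. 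Because the relations kill every product of two differentials carrying the same index $i$, each element acquires the normal form $\omega=\sum_I\omega_Idm_I$ with $\omega_I\in F(\mathbb{Z}^n,\mathbb{R})$, $I\colon\{1,\dots,n\}\to\{0,\dots,N-1\}$ and $dm_I=\prod_i\delta^{I(i)}m_i$ listed in increasing order of $i$, the degree of $dm_I$ being $|I|$. In particular $D_N(\mathbb{Z}^n)$ is concentrated in degrees $0$ through $n(N-1)$, the top degree $n(N-1)$ being realized by the monomial with every $I(i)=N-1$; this boundedness is what will ultimately force nilpotency.

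For the second claim I would first check that $\delta$, defined on generators by the stated rules (with the convention $\delta^Nm_i:=0$) and extended by the twisted Leibnitz rule, descends to the quotient. It suffices to verify that $\delta$ preserves the defining ideal, and since the ideal generators are products of pure differentials the twist plays no role there: the Leibnitz rule gives $\delta(\delta^jm_i\,\delta^km_i)=\delta^{j+1}m_i\,\delta^km_i+(-1)^j\delta^jm_i\,\delta^{k+1}m_i$, and each surviving term is again a product of two same-index differentials, hence lies in the ideal. Thus $\delta$ is well defined on $D_N(\mathbb{Z}^n)$, and since it sends degree zero into degree one and $\delta^jm_i$ into $\delta^{j+1}m_i$ of degree $j+1$, it is a degree-one map.

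Next I would derive the coefficient formula by applying the Leibnitz rule term by term to $\omega=\sum_I\omega_Idm_I$. There are two sources of contributions to the coefficient of a fixed monomial $dm_J$: differentiating the function $\omega_I$ produces $\sum_i\Delta_i(\omega_I)\,\delta m_i\,dm_I$, which feeds the coefficient of $dm_J$ with $J=I+e_i$ and $J(i)=1$, giving the first sum $\sum_{J(i)=1}(-1)^{|J_{<i}|}\Delta_i\omega_{J-e_i}$; differentiating an existing factor $\delta^{I(i)}m_i$ raises it to $\delta^{I(i)+1}m_i$, feeding the coefficient of $dm_J$ with $J(i)=I(i)+1\geq 2$, giving the second sum $\sum_{J(i)\geq2}(-1)^{|J_{<i}|}\omega_{J-e_i}$. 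In both cases the Koszul sign incurred by commuting the freshly produced differential past the factors $\delta^{I(k)}m_k$ with $k<i$ is exactly $(-1)^{|J_{<i}|}$, which accounts for the signs in the formula.

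It remains to deduce $\delta^{n(N-1)+1}=0$. The formula shows that each coefficient $(\delta\omega)_J$ is built only from coefficients $\omega_{J-e_i}$, whose multi-index has total degree $|J|-1$; equivalently, $\delta$ raises the form degree by exactly one. Since $D_N(\mathbb{Z}^n)$ vanishes in every degree larger than $n(N-1)$, the iterate $\delta^{n(N-1)+1}$ carries each homogeneous component into a piece of degree at least $n(N-1)+1$, which is zero. Hence $\delta^{n(N-1)+1}=0$ and $(D_N(\mathbb{Z}^n),\delta)$ is an $(n(N-1)+1)$-complex. I expect the genuine work to lie in the term-by-term derivation of the coefficient formula and, above all, in getting the Koszul signs $(-1)^{|J_{<i}|}$ right; by contrast, once the grading and the degree of $\delta$ are in place, the nilpotency is a one-line degree count.
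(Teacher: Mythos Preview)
Your proposal is correct and follows essentially the same route as the paper: derive the coefficient formula showing that $(\delta\omega)_J$ depends only on coefficients $\omega_{J-e_i}$ of strictly smaller total degree, and then invoke the fact that $D_N(\mathbb{Z}^n)$ is concentrated in degrees $0$ through $n(N-1)$ to conclude $\delta^{n(N-1)+1}=0$. You supply more detail than the paper does---in particular the check that $\delta$ preserves the defining ideal and the explicit Koszul-sign bookkeeping---but the architecture of the argument is identical.
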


One can  check that $\delta$ satisfies a twisted Leibnitz rule, so
$D_N(\mathbb{Z}^n)$ is actually pretty close of being a $N$-dga.
Let $\mathbb{Z}^{n,1} \subseteq
\mathbb{Z}^{n+1}$ consists of tuples $(m_0,...,m_n)$ such that
$m_0 ~+~ ... ~+ ~m_n~=~1$. Consider the functor $D_N$ defined on
$\Delta^{op}$ sending $n
\geq 0$ into
\begin{equation*}
D_N(n)= F(\mathbb{Z}^{n,1},\mathbb{R})\otimes  \left\langle
\delta^jm_i \delta^km_i,\, \,  \delta m_0 + ... + \delta m_n
\right\rangle .
\end{equation*}
A map $f \in \Delta(n,k)$ induces a morphisms  $D_N(f):D_N(k)
\longrightarrow D_N(n)$ given for $g \in F(\mathbb{Z}^{k,1},\mathbb{R})$ and $0 \leq j \leq k$ by
\begin{equation*}
D_N(f)(g)(m_0,...,m_n)= g\left(
\sum_{f(i)=0}m_i,...,\sum_{f(i)=k}m_i \right)  \,\,\,
\mathrm{ and } \,\,\,
D_N(f)(\delta m_j)= \sum_{f(i)=j}\delta m_i.
\end{equation*}
We extend $D_N$ to the functor defined on
$\mathrm{set}^{\Delta^{op}}$ sending a finitely generated
simplicial set $s$ into
$D_N(s)=\bigoplus_{i=0}^{\infty}D_N^{i}(s)$ where
\begin{equation*}
D_N^{i}(s)= \left\lbrace a \in \prod_{n=0}^{\infty}\prod_{p \in
s_n}D_N^{i}(n)
\mid a_{s(f)(p)}= D_N(f)(a_p)\,\,\, \mathrm{ for }\,\,\, p \in s_k \,\,\, \mathrm{ and }
\,\,\, f \in \Delta(n,k)  \right\rbrace .
\end{equation*}

A natural transformation $l:s \longrightarrow t$ induces a map
$\Omega_N(l):\Omega_N(t) \longrightarrow \Omega_N(s)$ by the rule
$[D_N(l)(a)]_p =a_{l(p)}$ for $a \in D_N(t)$ and $p \in s_n.$ Thus
for given integers $N$ and $K$ we have constructed a functor $D_N
\circ (\ \ )_{\leq K} \circ Sing$ on $Top$ sending a topological space $X$
into a sort of nil-differential graded algebra satisfying a
twisted Leibnitz rule $D_N
\left( Sing_{\leq K}(X)
\right).$ It would be interesting to compute the cohomology
groups of the algebra of difference forms of higher depth on known
simplicial sets. Even in the case of forms of depth $2$ these
groups have seldom been studied.

\section{On the (3,N) curvature}\label{m0}

Recall that a discrete quantum mechanical system is given by the
following data:
\begin{enumerate}
\item  A directed graph with set of vertices $V$ and set of directed edges $E$.
The Hilbert space of the system is $\mathcal{H}=\mathbb{C}^{V}$.
\item  A map $\omega:E \longrightarrow \mathbb{R}$ assigning a weight to
each edge.
\item Operators $U_n:\mathcal{H}\longrightarrow \mathcal{H}$ for $n\in \mathbb{N}$ given
by $(U_nf)(y)=\sum_{x\in V_\Gamma}\omega_n(y,x)f(x)$ where the
discretized kernel $\omega_n(y,x)$ is given by
\begin{equation*}
\omega_n(y,x)=\sum_{\gamma\in P_n(x,y)}\prod_{e\in\gamma}\omega(e).
\end{equation*}
$P_n(x,y)$ denotes the set of paths in $\Gamma$ from $x$ to $y$ of
length $n$, i.e., sequences $(e_1,\cdots,e_n)$ of edges such that
$s(e_1)=x$, $t(e_i)=s(e_{i+1})$, for  $i=1,...,n-1$ and
$t(e_n)=y$.
\end{enumerate}

Let us introduce some notation. For $s=(s_1,...,s_n)\in
{\mathbb{N}}^n$ we set $l(s)=n$ and $|s|~=~\sum_i{s_i}$.  For
$1<i\leq n$ we set $s_{<i}=(s_1,...,s_{i-1})$; also we set
$s_{>n}=s_{<1}=\emptyset$. ${\mathbb{N}}^{(\infty)}$ is equal to
$\bigsqcup_{n=0}^{(\infty)}{\mathbb{N}}^n$ where by convention
${\mathbb{N}}^{(0)}=\{\emptyset\}$. Let $A$ be a $3$-dga and $e$
be a degree one derivation on $A$. For $s\in{\mathbb{N}}^n$ we let
$e^{(s)}=e^{(s_1)}\cdots e^{(s_n)}$, where $e^{(l)}=d^{l}(e)$ if
$l\geq 1$, $e^{(0)}=e$ and $e^{\emptyset}=1$. For
$N\in{\mathbb{N}},$ we set $E_N=\left\lbrace
s\in{\mathbb{N}}^{(\infty)}\ | \ s \neq \emptyset
\mbox{ and } |s|+l(s)\leq N
\right\rbrace$ and for $s\in
E_N$ we let $N(s)\in\mathbb{Z}$ be given by $N(s)=N-|s|-l(s)$.

The following data defines a discrete quantum mechanical system:

\begin{enumerate}
\item The set of vertices is ${\mathbb{N}}^{(\infty)}$.
\item There is a unique directed edge from $s$ to
$t$ if and only if $t\in\{(0,s),s,(s+e_i)\},$ where
$e_i\in{\mathbb{N}}^{l(s)}$ are the canonical vectors.

\item Edges are weighted according to the table:
\smallskip
\begin{center}
\begin{tabular}{|l|l|l|}\hline $source$    &      $target$ &
$weight$      \\  \hline
$s$         &      $(0,s)$      & $1$                 \\
$s$         &      $s$          & $(-1)^{|s|+l(s)}$    \\
$s$         &     $(s+e_i)$    & $(-1)^{|s_{<i}|+i-1}$ \\
\hline
\end{tabular}
\end{center}
\end{enumerate}
$P_N(\emptyset,s)$ consists of paths $\gamma=(e_1,...,e_N)$, such
that $s(e_1)=\emptyset$, $t(e_N)=s$ and ~$s(e_{l+1})=t(e_l)$. The
weight $\omega(\gamma)$ of a path $\gamma\in P_N(\emptyset,s)$ is
given by $\omega(\gamma)=\prod_{i=1}^{N}\omega(e_i).$ The
following result, proved in \cite{AD}, tell us when $d+e$ defines
a deformation of a $3$-dga into a $N$-dga.

\begin{thm}\label{MCMN}{\em
$d+e$ defines a deformation of the $3$-dga $A$ into a $N$-dga if
and only if the  $(3,N)$ Maurer-Cartan equation holds $c_o + c_1 d
+ c_2 d^2=0,$ where for $0\leq k \leq 2$ we set
\begin{equation*}
c_k=\sum_{\begin{subarray}{c} s\in E_N\\ N(s)=k \\ s_i<3\\
\end{subarray}}c(s,N)e^{(s)}
\hspace{.5cm}\text{and }\hspace{.1cm} c(s,N)=\sum_{\gamma\in
P_N(\emptyset,s)}\omega(\gamma).
\end{equation*}
}
\end{thm}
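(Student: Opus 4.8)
The plan is to compute the operator $(d+e)^{N}\in \mathrm{End}(A)$ by normal ordering --- systematically commuting every copy of $d$ to the right --- and to recognise the combinatorics of this process as the transfer operator of the discrete quantum system described above. The only algebraic input needed is the law for moving $d$ past the iterated derivatives $e^{(l)}=d^{l}(e)$ computed with the degree one derivation induced on $\mathrm{End}(A)$, which I write here as $D=[d,\,\cdot\,]$. Since $e^{(l)}$ has degree $l+1$ and $D(e^{(l)})=e^{(l+1)}$ by definition, unwinding the graded commutator gives the single identity
\[
d\,e^{(l)}=e^{(l+1)}+(-1)^{\,l+1}\,e^{(l)}\,d ,
\]
an exact relation in $\mathrm{End}(A)$ valid for every $l\ge 0$. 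The goal is the closed form $(d+e)^{N}=\sum_{s}c(s,N)\,e^{(s)}d^{\,N(s)}$, after which imposing $d^{3}=0$ will isolate $c_{0}+c_{1}d+c_{2}d^{2}$.

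I would establish the closed form by induction on $N$, writing $(d+e)^{N}=(d+e)(d+e)^{N-1}$ and applying $d+e$ on the left to a single normal--ordered monomial $e^{(t)}d^{\,m}$ with $m=N-1-|t|-l(t)$. Exactly three families of terms appear. The factor $e$ prepends a zero, $e\cdot e^{(t)}=e^{(0,t)}$, with coefficient $1$; this is the edge $t\to(0,t)$. The factor $d$ is pushed rightward through $e^{(t_{1})}\cdots e^{(t_{n})}$ by the identity above: either it lands on a slot $i$, raising $t_{i}$ by one after crossing the slots $t_{1},\dots,t_{i-1}$ and thereby accumulating the sign $\prod_{j<i}(-1)^{t_{j}+1}=(-1)^{|t_{<i}|+i-1}$, which is the edge $t\to t+e_{i}$; or it crosses all $n$ slots and reaches $d^{m}$, producing $d^{m+1}$ with sign $\prod_{j}(-1)^{t_{j}+1}=(-1)^{|t|+l(t)}$, the self--loop $t\to t$. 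Collecting the coefficient of a fixed $e^{(s)}d^{\,N(s)}$ gives
\[
c(s,N)=[\,s_{1}=0\,]\,c\bigl((s_{2},\dots,s_{l(s)}),N-1\bigr)+\sum_{i:\,s_{i}\ge 1}(-1)^{|s_{<i}|+i-1}c(s-e_{i},N-1)+(-1)^{|s|+l(s)}c(s,N-1),
\]
which is precisely the sum over the predecessors of $s$ (the first term occurring only when $s_{1}=0$) weighted by the three entries of the weight table. With the base case $c(\emptyset,0)=1$ coming from the empty path, the induction yields $c(s,N)=\sum_{\gamma\in P_{N}(\emptyset,s)}\omega(\gamma)$. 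A separate bookkeeping check shows that each factor of $d+e$ raises $|s|+l(s)+(\text{power of }d)$ by one, so after $N$ steps the power of $d$ is forced to be $N(s)$; this is what allows the lattice point $s$ alone to index the monomials.

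Finally I would feed the hypothesis $d^{3}=0$ into the closed form. Every monomial with $N(s)\ge 3$ carries the factor $d^{\,N(s)}=0$ and disappears, and the degenerate term $s=\emptyset$ contributes only $d^{N}=0$ since $N\ge 3$; grouping the survivors by the power of $d$ leaves exactly $c_{0}+c_{1}d+c_{2}d^{2}$ with $c_{k}=\sum_{N(s)=k}c(s,N)e^{(s)}$. The same relation $d^{3}=0$ rewrites each derivative of order $\ge 3$ in terms of $e^{(0)},e^{(1)},e^{(2)}$ and powers of $d$ (for instance $e^{(3)}=e^{(2)}d-e^{(1)}d^{2}$), which is the mechanism producing the range $s_{i}<3$. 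Hence $(d+e)^{N}=0$ if and only if $c_{0}+c_{1}d+c_{2}d^{2}=0$, as claimed.

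The step I expect to be the main obstacle is the sign accounting: one must verify that crossing a single slot $e^{(t_{j})}$ really contributes $(-1)^{t_{j}+1}$ and that these factors assemble into the table weights $(-1)^{|s_{<i}|+i-1}$ and $(-1)^{|s|+l(s)}$ with no error traceable to the degree $l+1$ of $e^{(l)}$. The second delicate point is the reduction under $d^{3}=0$: one must confirm that discarding the powers $d^{\ge 3}$ and collapsing the derivatives of order $\ge 3$ is consistent, so that the coefficients surviving in $c_{0},c_{1},c_{2}$ are exactly the path sums $c(s,N)$ indexed by $s$ with $s_{i}<3$.
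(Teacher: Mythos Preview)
The paper does not prove this theorem here; it is quoted from \cite{AD}, so there is no in--paper argument to compare against. Your normal--ordering strategy is the natural one, and the core of it is correct: the commutation law $d\,e^{(l)}=e^{(l+1)}+(-1)^{l+1}e^{(l)}d$ is right, your sign bookkeeping for crossing a slot is right, and the recursion you write for $c(s,N)$ is exactly the transfer operator of the stated weighted graph. This cleanly establishes the identity $(d+e)^{N}=\sum_{s\in E_N}c(s,N)\,e^{(s)}d^{N(s)}$ with \emph{no} restriction on the entries $s_i$, after which $d^{3}=0$ kills every summand with $N(s)\ge 3$ and the empty--tuple term.

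The genuine gap is precisely the second ``delicate point'' you flag, and it is not just delicate---it does not go through as stated. Rewriting $e^{(3)}=e^{(2)}d-e^{(1)}d^{2}$ (and the analogous reductions for $e^{(l)}$, $l\ge 3$) does collapse the range of indices to $s_i<3$, but it \emph{redistributes} those contributions into $c_1$ and $c_2$; it does not annihilate them. Concretely, for $N=4$ the only tuple with an entry $\ge 3$ is $s=(3)$, with $c((3),4)=1$, and a direct expansion of $(d+e)^{4}$ under $d^{3}=0$ gives
\[
(d+e)^{4}=c_0^{\mathrm{paper}}+e^{(2)}d+(e^{(1)}+2e^{2})d^{2},
\]
not $c_0^{\mathrm{paper}}+0\cdot d+2(e^{(1)}+e^{2})d^{2}$ as the restricted sum of the stated theorem yields; the discrepancy is exactly $e^{(3)}=e^{(2)}d-e^{(1)}d^{2}$. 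So either the clause $s_i<3$ is a transcription slip from \cite{AD} and the sum should run over all $s\in E_N$ with $N(s)=k$ (which is what your argument actually proves), or the intended $c(s,N)$ in \cite{AD} already incorporate the reduction in a way not visible from the path description reproduced here. Your proof establishes the correct operator identity; it cannot, and should not try to, recover the formula with the $s_i<3$ cutoff as written.
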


\subsection*{Exact deformations}\label{m1}

Let us first consider the deformation of a $3$-dga into a $3$-dga.
According to Theorem \ref{MCMN} the derivation $d+e$ defines a
$3$-dga if and only if $c_o + c_1 d + c_2 d^2=0$ where
\begin{equation*}
c_k=\sum_{\begin{subarray}{c} s\in E_3\\ N(s)=k \\ s_i<3\\
\end{subarray}}c(s,3)e^{(s)}
\hspace{.5cm} \mathrm{and} \hspace{.1cm} c(s,3)=\sum_{\gamma\in
P_3(\emptyset,s)}\omega(\gamma).
\end{equation*}
 Let us compute the coefficients
$c_k$. We have that
\begin{equation*}
E_3=\left\lbrace  (0), (1), (2), (0,0), (1,0), (0,1), (0,0,0)
\right\rbrace .
\end{equation*}

Let us first compute $c_0$. There are four vectors $s$ in $E_3$
such that $N(s)=0$, these are $(2), (1,0)$, $(0,1)$ and $(0,0,0)$.
The only path from $\emptyset$ to $(2)$ of length $3$ is
\begin{equation*}
\emptyset \longrightarrow (0)\longrightarrow  (1)\longrightarrow (2)
\end{equation*}
of weight $1$. Since $e^{(2)}=d^2(e)$, then we have that
$c((2),3)=d^2(e)$. The unique path from $\emptyset$ to $(1,0)$ of
length $3$ is
\begin{equation*}
\emptyset\longrightarrow (0)\longrightarrow (0,0)\longrightarrow (1,0)
\end{equation*}
of weight $1$. Since $e^{(1,0)}=d(e)e$ we have that
$c((1,0),3)=d(e)e$. There are two paths from $\emptyset$ to
$(0,1)$ of length $3$, namely
\begin{equation*}
\emptyset\longrightarrow (0)\longrightarrow (0,0)\longrightarrow (0,1)
\end{equation*}
\begin{equation*}
\emptyset\longrightarrow (0)\longrightarrow (1)\longrightarrow (0,1)
\end{equation*}
of weight $-1$ and $1$, respectively. Thus $c((1,0),3)=0$ since
the sum of the weights vanishes. The unique path from $\emptyset$
to $(0,0,0)$ of length $3$ is
\begin{equation*}
\emptyset\longrightarrow (0)\longrightarrow (0,0)\longrightarrow (0,0,0)
\end{equation*}
of weight $1$. Since $e^{(0,0,0)}=e^3$, then $c((0,0,0),3)=e^3$.
Thus we have shown that
\begin{equation*}
c_0=d^2(e)+d(e)e+e^3.
\end{equation*}

We proceed to compute $c_1$. The vectors in $E_3$ such that
$N(s)=1$ are $(1)$ and $(0,0)$. Paths from $\emptyset$ to $(1)$ of
length $3$ are
\begin{equation*}
\emptyset\longrightarrow \emptyset\longrightarrow (0)\longrightarrow (1)
\end{equation*}
\begin{equation*}
\emptyset\longrightarrow (0)\longrightarrow (0)\longrightarrow (1)
\end{equation*}
\begin{equation*}
\emptyset\longrightarrow (0)\longrightarrow (1)\longrightarrow (1)
\end{equation*}
of weight $1$, $-1$ and $1$, respectively. Since $e^{(1)}=d(e)$,
then $c((1),3)=d(e)$. Paths from $\emptyset$ to $(0,0)$ of length
$3$ are
\begin{equation*}
\emptyset\longrightarrow (0)\longrightarrow (0)\longrightarrow (0,0)
\end{equation*}
\begin{equation*}
\emptyset\longrightarrow \emptyset\longrightarrow (0)\longrightarrow (0,0)
\end{equation*}
\begin{equation*}
\emptyset\longrightarrow (0)\longrightarrow (0,0)\longrightarrow (0,0).
\end{equation*}

The corresponding weights are, respectively, $-1$, $1$ and $1$. We
have that $e^{(0,0)}=e^2$, thus $c((0,0),3)=e^2$ and
$c_1=d(e)+e^2.$

Finally we compute $c_2$. $(0)$ is the only vector in $E_3$ such
that $N(s)=2$. The paths from $\emptyset$ to $(0)$ of length $3$
are
\begin{equation*}
\emptyset\longrightarrow \emptyset\longrightarrow \emptyset\longrightarrow (0)
\end{equation*}
\begin{equation*}
\emptyset\longrightarrow \emptyset\longrightarrow (0)\longrightarrow (0)
\end{equation*}
\begin{equation*}
\emptyset\longrightarrow (0)\longrightarrow (0)\longrightarrow (0).
\end{equation*}

The corresponding weights are, respectively, $1$, $-1$ and $1$.
Since $e^{(0)}=e$ then $c_2=c((0),3)=e$. Altogether we have proven
the following result.

\begin{thm}{\em $d+e$ defines a deformation of the $3$-dga $A$ into a $3$-dga if and
only if
\begin{equation*}
(d^2(e)+d(e)e+e^3)+(d(e)+e^2)d+ed^2=0.
\end{equation*}
}
\end{thm}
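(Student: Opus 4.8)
The plan is to specialize Theorem~\ref{MCMN} to the case $N=3$, so that the statement reduces to the explicit evaluation of the three coefficients $c_0$, $c_1$ and $c_2$. By that theorem, $d+e$ deforms the $3$-dga $A$ into a $3$-dga precisely when $c_0 + c_1 d + c_2 d^2 = 0$, where $c_k = \sum c(s,3)\, e^{(s)}$ runs over those $s \in E_3$ with $N(s)=k$ and $s_i<3$, and $c(s,3) = \sum_{\gamma \in P_3(\emptyset,s)} \omega(\gamma)$. The first step is therefore to list $E_3 = \{(0),(1),(2),(0,0),(1,0),(0,1),(0,0,0)\}$ and to sort its elements by the value of $N(s) = 3 - |s| - l(s)$. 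One finds $N(s)=2$ only for $(0)$; $N(s)=1$ for $(1)$ and $(0,0)$; and $N(s)=0$ for $(2)$, $(1,0)$, $(0,1)$ and $(0,0,0)$.

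The combinatorial heart of the argument is the second step: for each such $s$ I would enumerate every directed path $\gamma=(e_1,e_2,e_3)$ of length three from $\emptyset$ to $s$ in the weighted graph and sum the products of edge weights. Here one must keep track of all three edge types from the weight table, namely the lifting edge $s \to (0,s)$ of weight $1$, the self-loop $s \to s$ of weight $(-1)^{|s|+l(s)}$ (in particular the loop at $\emptyset$ has weight $1$), and the raising edge $s \to (s+e_i)$ of weight $(-1)^{|s_{<i}|+i-1}$. The self-loops are what produce several competing paths even for very simple targets such as $(1)$ or $(0,0)$, and it is exactly there that signs can cancel. After summing I expect $c((0),3)=c((1),3)=c((0,0),3)=c((2),3)=c((1,0),3)=c((0,0,0),3)=1$, while the two length-three paths to $(0,1)$, of weights $+1$ and $-1$, cancel to give $c((0,1),3)=0$.

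The final step is bookkeeping. Using $e^{(l)}=d^{l}(e)$ for $l\geq 1$ and $e^{(0)}=e$, I would read off $e^{(0)}=e$, $e^{(1)}=d(e)$, $e^{(2)}=d^2(e)$, $e^{(0,0)}=e^2$, $e^{(1,0)}=d(e)e$, $e^{(0,0,0)}=e^3$, and assemble
\begin{equation*}
c_2 = e, \qquad c_1 = d(e)+e^2, \qquad c_0 = d^2(e)+d(e)e+e^3,
\end{equation*}
the vanishing of $c((0,1),3)$ being precisely what prevents a term $e\,d(e)$ from appearing in $c_0$. Substituting these into $c_0 + c_1 d + c_2 d^2 = 0$ then yields the claimed identity.

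I expect the only genuine obstacle to be the sign bookkeeping in the path sums: correctly applying the exponent $(-1)^{|s_{<i}|+i-1}$ on raising edges and $(-1)^{|s|+l(s)}$ on loops, and making sure that none of the length-three paths is overlooked. Everything else is a direct substitution into Theorem~\ref{MCMN}.
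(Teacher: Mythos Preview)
Your proposal is correct and follows essentially the same route as the paper: specialize Theorem~\ref{MCMN} to $N=3$, list $E_3$, group its elements by $N(s)$, enumerate the length-three paths with their signed weights, and read off $c_0,c_1,c_2$. The paper carries out exactly this enumeration and arrives at the same coefficient values you predict, including the cancellation $c((0,1),3)=0$ that kills the $e\,d(e)$ term.
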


Consider now deformations of a $3$-dga into a $4$-dga. Again by
Theorem \ref{MCMN} we must have $c_0+c_1d+c_2d^2=0$. We proceed to
compute the coefficients $c_k$. We have that
\begin{eqnarray*}
& E_4=\{(0), (1), (2),(0,0), (1,0),
(0,1), (2,0), (0,2), (1,1), & \\
& \qquad \qquad \qquad (0,0,0), (1,0,0), (0,1,0),
(0,0,1),(0,0,0,0)\}. &
\end{eqnarray*}
$(0)$ is the only vector in $E_4$ such that $N(s)=3$. Paths of
length $4$ from $\emptyset$ to $(0)$ are of the form
$\underbrace{\emptyset\rightarrow\cdots\rightarrow\emptyset}_{i}{\rightarrow}
\underbrace{(0)\rightarrow\cdots\rightarrow(0)}_{j}$ with
weight $(-1)^{j}$, where $i+j=3$, thus we have that
$c_3=(1-1+1-1)e=0$.

We compute $c_2$. Vectors in $E_4$ with $N(s)=2$ are $(0,0)$ and
$(1).$ Paths from $\emptyset$ to $(0,0)$ of length $2$ are of the
form
$\underbrace{\emptyset\rightarrow\cdots\rightarrow\emptyset}_{i}\rightarrow
\underbrace{(0)\rightarrow\cdots\rightarrow(0)}_{j}\rightarrow\underbrace{(0,0)
\rightarrow\cdots\rightarrow(0,0)}_{k}$ of weight
$\sum_{i+j+k=2}(-1)^{j}=2$, thus $c((0,0),4)e^{(0,0)}=2e^2$. Paths
from $\emptyset$ to $(1)$ of length $2$ are of the form
$\underbrace{\emptyset\rightarrow\cdots\rightarrow\emptyset}_{i}\rightarrow
\underbrace{(0)\rightarrow\cdots\rightarrow(0)}_{j}\rightarrow\underbrace{(1)\rightarrow\cdots\rightarrow
(1)}_{k}$ with weight $\sum_{i+j+k=2}(-1)^{j}=2$, thus
$c((1),4)e^{(1)}=2d(e)$ and $c_2=2(e^2+d(e))$.

Let us now compute $c_1$. Vectors in $E_4$ with $N(s)=1$ are
$(0,0,0),$ $(1,0)$, $(0,1)$ and $(2).$  Paths from $\emptyset$ to
$(0,0,0)$ are of $5$ types. Paths of the form
\begin{equation*}
\underbrace{\emptyset\rightarrow\cdots\rightarrow\emptyset}_{i}\rightarrow
\underbrace{(0)\rightarrow\cdots\rightarrow(0)}_{j}\rightarrow\underbrace{(0,0)
\rightarrow\cdots\rightarrow(0,0)}_{k}\rightarrow\underbrace{(0,0,0)\rightarrow\cdots\rightarrow(0,0,0)}_{l}
\end{equation*}
with weight $\sum_{i+j+k+l=1}(-1)^{j}(-1)^l,$ so that
$c((0,0,0),4)e^{(0,0,0)}=(1-1+1-1)e^3=0$. Paths of the form
\begin{equation*}
\underbrace{\emptyset\rightarrow\cdots\rightarrow\emptyset}_{i}\rightarrow
\underbrace{(0)\rightarrow\cdots\rightarrow(0)}_{j}\rightarrow\underbrace{(1)\rightarrow\cdots\rightarrow
(1)}_{k}\rightarrow\underbrace{(0,1)\rightarrow\cdots\rightarrow(0,1)}_{l}
\end{equation*}
with weight
\begin{equation*}
\sum_{i+j+k+l=1}(-1)^{j}(-1)^{l}=1-1+1-1=0\,.
\end{equation*}
 Path of the form
\begin{equation*}
\underbrace{\emptyset\rightarrow\cdots\rightarrow\emptyset}_{i}\rightarrow
\underbrace{(0)\rightarrow\cdots\rightarrow(0)}_{j}\rightarrow\underbrace{(0,0)
\rightarrow\cdots\rightarrow(0,0)}_{k}\rightarrow\underbrace{(0,1)\rightarrow\cdots\rightarrow(0,1)}_{l}
\end{equation*}
of weight $\sum_{i+j+k+l=1}(-1)^{j}(-1)(-1)^{l}$ so  that
\begin{equation*}
c((0,1),4)e^{(0,1)}=((1-1+1-1)+(1-1+1-1))ed(e)=0.
\end{equation*}
Paths of the form
\begin{equation*}
\underbrace{\emptyset\rightarrow\cdots\rightarrow\emptyset}_{i}\rightarrow
\underbrace{(0)\rightarrow\cdots\rightarrow(0)}_{j}\rightarrow\underbrace{(0,0)
\rightarrow\cdots\rightarrow(0,0)}_{k}\rightarrow\underbrace{(1,0)\rightarrow\cdots\rightarrow(1,0)}^{l}
\end{equation*}
of weight $\sum_{i+j+k+l=1}(-1)^{j}(-1){l}$, thus
$c((1,0),4)e^{(1,0)}=(1-1+1-1)d(e)e=0$. There are also paths of
the form
\begin{equation*}
\underbrace{\emptyset\rightarrow\cdots\rightarrow\emptyset}_{i}\rightarrow
\underbrace{(0)\rightarrow\cdots\rightarrow(0)}_{j}\rightarrow\underbrace{(1)
\rightarrow\cdots\rightarrow(1)}_{k}\rightarrow\underbrace{(2)\rightarrow\cdots\rightarrow(2)}_{l}
\end{equation*}
of weight $\sum_{i+j+k+l=1}(-1)^{j}(-1)^{l},$ so we have
$c((2),4)e^{(2)}=(1-1+1-1)d^2(e)=0$. We have shown that
\begin{equation*}
c_1=c((0,0,0),4)e^{(0,0,0)}+c((0,1),4)e^{(0,1)}+c((1,0),4)e^{(1,0)}+c((2),4)e^{(2)}=0.
\end{equation*}

Let us compute $c_0$. There are several types of paths in this
case. Path
\begin{equation*}
\emptyset\longrightarrow(0)\longrightarrow(0,0)\longrightarrow(0,0,0)\longrightarrow(0,0,0,0)
\end{equation*}
of weight $1$, thus $c_q((0,0,0,0),4)a^{(0,0,0,0)}=a^4$. Paths
\begin{equation*}
\emptyset\longrightarrow(0)\longrightarrow(0,0)\longrightarrow(0,0,0)\longrightarrow(0,0,1)
\end{equation*}
\begin{equation*}
\emptyset\longrightarrow(0)\longrightarrow(1)\longrightarrow(0,1)\longrightarrow(0,0,1)
\end{equation*}
\begin{equation*}
\emptyset\longrightarrow(0)\longrightarrow(0,0)\longrightarrow(0,1)\longrightarrow(0,0,1)
\end{equation*}
of weight $1$, thus we have that
$c((0,0,1),4)e^{(0,0,1)}=e^2d(e)$. Paths
\begin{equation*}
\emptyset\longrightarrow(0)\longrightarrow(0,0)\longrightarrow(1,0)\longrightarrow(0,1,0)
\end{equation*}
\begin{equation*}
\emptyset\longrightarrow(0)\longrightarrow(0,0)\longrightarrow(0,0,0)\longrightarrow(0,1,0)
\end{equation*}
of weight $0$, thus $c((0,1,0),4)e^{(0,1,0)}=0ad(a)a=0$. Path
\begin{equation*}
\emptyset\longrightarrow(0)\longrightarrow(0,0)\longrightarrow(0,0,0)\longrightarrow(1,0,0)
\end{equation*}
of weight $1$, thus $c((1,0,0),4)e^{(1,0,0)}=d(e)e^2$. Path
\begin{equation*}
\emptyset\longrightarrow(0)\longrightarrow(0,0)\longrightarrow(1,0)\longrightarrow(2,0)
\end{equation*}
of weight $1$, so $c((2,0),4)e^{(2,0)}=d^2(e)e$. Paths
\begin{equation*}
\emptyset\longrightarrow(0)\longrightarrow(0,0)\longrightarrow(0,1)\longrightarrow(0,2)
\end{equation*}
\begin{equation*}
\emptyset\longrightarrow(0)\longrightarrow(1)\longrightarrow(0,1)\longrightarrow(0,2)
\end{equation*}
\begin{equation*}
\emptyset\longrightarrow(0)\longrightarrow(1)\longrightarrow(2)\longrightarrow(0,2)
\end{equation*}
of weight $1$, so that $c((0,2),4)e^{(0,2)}=ed^2(e)$. There are
also paths
\begin{equation*}
\emptyset\longrightarrow(0)\longrightarrow(0,0)\longrightarrow(1,0)\longrightarrow(1,1)
\end{equation*}
\begin{equation*}
\emptyset\longrightarrow(0)\longrightarrow(1)\longrightarrow(0,1)\longrightarrow(1,1)
\end{equation*}
of weight $2$, so that $c((1,1),4)e^{(1,1)}=(d(e))^2$. We see that
\begin{equation*}
c_0=e^4+e^2d(e)+d(e)e^2+d^2(e)e+ed^2(e)+(d(e))^2.
\end{equation*}
All together we have shown the following result.

\begin{thm}{\em $d+e$ defines a deformation of the $3$-dga $A$ into a $4$-dga if and
only if
\begin{equation*}
(e^4+e^2d(e)+d(e)e^2+d^2(e)e+ed^2(e)+(d(e))^2)+2(e^2+d(e))d^2=0.
\end{equation*}
}
\end{thm}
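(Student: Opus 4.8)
The plan is to invoke Theorem~\ref{MCMN} in the case $N=4$, which reduces the claim to the explicit evaluation of the coefficients $c_0$, $c_1$, $c_2$. By that theorem, $d+e$ deforms the $3$-dga $A$ into a $4$-dga if and only if $c_0 + c_1 d + c_2 d^2 = 0$, where $c_k = \sum c(s,4)\, e^{(s)}$, the sum ranging over $s \in E_4$ with $N(s) = k$ and all entries $s_i < 3$, and $c(s,4) = \sum_{\gamma \in P_4(\emptyset,s)} \omega(\gamma)$. Since $d^3 = 0$ on $A$ and hence on $\mathrm{End}(A)$, the index $k$ only needs to run over $0,1,2$; the restriction $s_i < 3$ reflects $e^{(l)} = d^l(e) = 0$ for $l \geq 3$. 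So the entire proof is the combinatorial determination of $E_4$, its stratification by the value of $N(s) = 4 - |s| - l(s)$, and the path-weight sums producing each $c(s,4)$.

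First I would list $E_4$ explicitly and sort its elements by $N(s)$: the stratum $N(s)=2$ consists of $(1)$ and $(0,0)$; the stratum $N(s)=1$ of $(2)$, $(1,0)$, $(0,1)$, $(0,0,0)$; and the stratum $N(s)=0$ of $(2,0)$, $(0,2)$, $(1,1)$, $(1,0,0)$, $(0,1,0)$, $(0,0,1)$, $(0,0,0,0)$ (the remaining vector $(0)$ lies in the stratum $N(s)=3$, which is discarded). Then, for each target $s$, I would enumerate the length-$4$ directed paths from $\emptyset$ using the three edge types of the table: the prepend edge $s \to (0,s)$ of weight $1$, the self-loop $s \to s$ of weight $(-1)^{|s|+l(s)}$, and the raising edge $s \to s+e_i$ of weight $(-1)^{|s_{<i}|+i-1}$. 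The organizing principle is that both prepends and raisings increase $|s|+l(s)$ by one while self-loops preserve it; hence a path to a target with $N(s)=k$ consists of exactly $4-k$ productive moves that build $s$ out of $\emptyset$, interleaved with $k$ self-loops padding the length to $4$. A self-loop taken at an intermediate state $s'$ contributes the sign $(-1)^{|s'|+l(s')}$, so summing over all interleavings turns each $c(s,4)$ into an alternating sum of the shape $\sum_{i+j+\cdots=k}(-1)^{(\cdots)}$, exactly as in the $(3,3)$ computation carried out above.

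Executing this stratum by stratum, the $N(s)=2$ targets each give $\sum_{i+j+k=2}(-1)^{j} = 2$, whence $c_2 = 2e^2 + 2d(e) = 2(e^2+d(e))$; every $N(s)=1$ target produces a fully cancelling alternating sum, so $c_1 = 0$; and the $N(s)=0$ stratum, where there are no self-loops and only the raising signs enter, assembles into $c_0 = e^4 + e^2 d(e) + d(e)e^2 + d^2(e)e + ed^2(e) + (d(e))^2$. Substituting these three values into $c_0 + c_1 d + c_2 d^2 = 0$ yields precisely the asserted identity. I expect the only genuine difficulty to be bookkeeping: one must track two independent families of signs---the self-loop signs governed by the running parity $|s'|+l(s')$ and the raising signs $(-1)^{|s_{<i}|+i-1}$---across every admissible ordering, and be sure to include every path. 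The $(1,1)$ contribution is the delicate one: it receives three productive paths, two of weight $+1$ and one of weight $-1$ arising from raising the second coordinate before the first at the state $(0,0)$, so that the net coefficient of $(d(e))^2$ is $1$ rather than $2$; overlooking the cancelling path is the easiest way to corrupt $c_0$.
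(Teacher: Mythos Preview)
Your proposal is correct and follows essentially the same route as the paper: invoke Theorem~\ref{MCMN} with $N=4$, stratify $E_4$ by the value of $N(s)$, and evaluate each $c(s,4)$ by enumerating length-$4$ weighted paths. Your handling of the $(1,1)$ target is in fact more careful than the paper's own account, which lists only the two paths through $(1,0)$ and through $(1)\to(0,1)$ (with stated total weight $2$) yet records the contribution as $(d(e))^2$; the third path $\emptyset\to(0)\to(0,0)\to(0,1)\to(1,1)$ of weight $-1$ that you single out is precisely what makes $c((1,1),4)=1$ and reconciles the enumeration with the stated theorem.
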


\subsection*{Infinitesimal deformations}\label{m2}

Let $t$ be a formal parameter such that $t^2=0.$

\begin{thm}{\em
Let $(A,d)$ be a $N$-dga and $e$ a degree one derivation on $A$,
then we have
\begin{equation*}
(d+te)^N=t \sum_{k=0}^{N-1}\left(\sum_{p\in
Par(k,N-k+1)}(-1)^{w(p)}\right)d^{N-k-1}(e)d^{N-k-1},
\end{equation*}
where
\begin{equation*}
Par(k,N-k+1)=\{p=(p_1,\cdots,p_{N-k+1})\ |  \ \sum_{i=1}^{N-k+1}
p_i=k\}
\mbox{ and } w(p)=\sum_{i=1}^{N-k+1} (i-1)p_i.
\end{equation*}
}
\end{thm}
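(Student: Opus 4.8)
The plan is to expand $(d+te)^N$ to first order in $t$, straighten the resulting operator words into the normal form $d^{l}(e)\,d^{m}$ (where $d^{l}(e)$ denotes the $l$-th power of the induced derivation applied to $e$, while $d^{m}$ is an ordinary composition power), and finally identify the coefficient of each normal-form monomial with the stated signed sum over $Par(k,N-k+1)$.

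First I would expand the $N$-fold product. Since $t^2=0$, every word containing two or more factors $te$ vanishes, while the unique word with no factor $te$ equals $d^N=0$. Hence only the words with exactly one $te$ survive, which gives
\begin{equation*}
(d+te)^N = t\sum_{j=0}^{N-1} d^{\,j}\,e\,d^{\,N-1-j}.
\end{equation*}
Next I would move the left block $d^{j}$ across $e$ and convert it into powers of the induced derivation. Recall that the induced degree-one derivation on $End(A)$ (also written $d$) satisfies $d(f)=d\circ f-(-1)^{\bar{f}}f\circ d$; since $d^{l}(e)$ has degree $l+1$, this yields the straightening relation
\begin{equation*}
d\, d^{l}(e)=d^{l+1}(e)+(-1)^{l+1}d^{l}(e)\,d .
\end{equation*}
A routine induction on $j$ then produces $d^{j}e=\sum_{l=0}^{j}c_{j,l}\,d^{l}(e)\,d^{\,j-l}$, with the recurrence $c_{j+1,l}=c_{j,l-1}+(-1)^{l+1}c_{j,l}$ read off from the relation and $c_{0,0}=1$. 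I would record the combinatorial meaning of $c_{j,l}$: it is the signed count of sequences of $j$ moves, each either \emph{raise} the derivation order by one or \emph{keep} it fixed and emit a $d$ on the right, ending at order $l$; a fixed-move performed at order $l'$ carries the sign $(-1)^{l'+1}$. Writing $q_{l'}$ for the number of fixed-moves at order $l'$ (so $\sum_{l'=0}^{l}q_{l'}=j-l$), this reads $c_{j,l}=\sum(-1)^{\sum_{l'}(l'+1)q_{l'}}$. Because the trailing block collapses, $d^{\,j-l}d^{\,N-1-j}=d^{\,N-1-l}$ independently of $j$, substituting back gives
\begin{equation*}
(d+te)^N=t\sum_{l=0}^{N-1}\Big(\sum_{j=l}^{N-1}c_{j,l}\Big)d^{l}(e)\,d^{\,N-1-l}.
\end{equation*}

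It then remains to evaluate $C_l:=\sum_{j=l}^{N-1}c_{j,l}$ and match it with the theorem. Summing the path description over $j$ amounts to letting the total number of fixed-moves $r=j-l$ range freely over $0\le r\le N-1-l$. I would absorb this free range into one extra part: introduce a slack variable $p_1:=(N-1-l)-r\ge 0$ of weight zero, together with $p_{l'+2}:=q_{l'}$ of weight $l'+1$ for $0\le l'\le l$. The resulting $(l+2)$-tuple $(p_1,\dots,p_{l+2})$ ranges exactly over $Par(N-1-l,\,l+2)$, its sign $(-1)^{\sum_i(i-1)p_i}$ equals $(-1)^{w(p)}$, and the weight-zero slack part contributes trivially. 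Setting $k=N-1-l$ (so $l+2=N-k+1$) turns $C_l$ into $\sum_{p\in Par(k,N-k+1)}(-1)^{w(p)}$ and the surviving monomial $d^{l}(e)\,d^{\,N-1-l}$ into $d^{\,N-k-1}(e)\,d^{\,k}$, which is the claimed formula.

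The main obstacle is the sign bookkeeping in the straightening step: the sign $(-1)^{l+1}$ depends on the current derivation order, so the coefficients $c_{j,l}$ are genuinely level-weighted path counts rather than plain binomials, and they must be tracked carefully through the induction. The second and more delicate point is the final re-indexing: one must recognize that the summation over $j$ is exactly what supplies the extra weight-zero part $p_1$ in $Par(k,N-k+1)$, so that the number of parts comes out to $N-k+1$ rather than $N-k$, and one must verify that this slack part carries weight zero so that it introduces no sign. That verification is what makes the two signed sums coincide.
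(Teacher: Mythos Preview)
Your argument is correct and self-contained, whereas the paper's proof proceeds by invoking the general path-counting framework of Theorem~\ref{MCMN} (more precisely, its $(M,N)$ version from \cite{AD} with $M=N$): from that theorem one has $(d+te)^N=\sum_{k}c_k d^{k}$, and $t^2=0$ forces $l(s)\le 1$, leaving only the vertex $s=(N-k-1)$ at level $k$; the coefficient $c((N-k-1),N)$ is then read off from length-$N$ paths $\emptyset\to(0)\to\cdots\to(N-k-1)$ in the weighted graph, which are parametrized by $Par(k,N-k+1)$ with weight $(-1)^{w(p)}$. You rebuild exactly this combinatorics from scratch via the straightening identity $d\,d^{l}(e)=d^{l+1}(e)+(-1)^{l+1}d^{l}(e)\,d$. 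Your ``keep'' moves at order $l'$ with sign $(-1)^{l'+1}$ are precisely the loop edges at the vertex $(l')$ in the paper's graph, and your slack variable $p_1$ (arising from the sum over $j$) is precisely the number of loops at the initial vertex $\emptyset$; so the two arguments assemble the same bijection. Your route buys independence from the external machinery; the paper's buys brevity by quoting an already-established theorem.

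One small remark: the monomial you obtain, $d^{N-k-1}(e)\,d^{k}$, agrees with what the paper's own proof produces (since $c_k$ is the coefficient of $d^{k}$), even though the displayed statement of the theorem writes the trailing factor as $d^{N-k-1}$, which appears to be a misprint.
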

\begin{proof}
From Theorem \ref{MCMN} we know that $D^N=\sum_{k=0}^{N-1}c_kd^k$.
Since $t^2=0$, then
\begin{equation*}
(te)^{(s)}=(te)^{(s_1)}\cdots(te)^{(s_{l(s)})}=t^{l(s)}e^{(s)}=0
\end{equation*}
unless $l(s)\leq 1$. On the other hand we have that
\begin{equation*}
E_N=\{(0),(1),\cdots,(N-1)\}.
\end{equation*}
Suppose that $l(s)=1$ and $N(s)=N-|s|-l(s)=k$, thus $|s|=N-k-1$.
The unique vector $s$ in $E_N$ of length $1$ such that $|s|=N-k-1$
is $s=(N-k-1)$. Therefore
\begin{equation*}
c_k=\sum_{\begin{subarray}{c} s\in E_N\\ N(s)=k \\ s_i<M\\
\end{subarray}}c(s,N)e^{(s)}=c((N-k-1),N)e^{(s)}=c((N-k-1),N)d^{N-k-1}(e).
\end{equation*}
A path from $\emptyset$ to $(N-k-1)$ of length $N$ must be of the
form
\begin{equation*}
\emptyset\underbrace{\rightarrow\cdots\rightarrow}_{p_1}\emptyset
\rightarrow(0)\underbrace{\rightarrow\cdots\rightarrow}_{p_2}(0)
\rightarrow(1)\underbrace{\rightarrow\cdots \rightarrow}_{p_3}(1)\rightarrow\cdots \rightarrow (N-k-1)
\underbrace{\rightarrow\cdots\rightarrow}_{p_{N-k+1}}(N-k-1)
\end{equation*}
with $(p_1+1)+(p_1+1)+\cdots+(p_{N-k}+1)+p_{N-k+1}=N$, i.e.,
$\sum_{i=1}^{N-k+1}p_i=k$. The weight of such path is
\begin{equation*}
(-1)^{0p_1}(-1)^{(2-1)p_2}(-1)^{(3-1)p_2}...(-1)^{(N-k)p_{N-k+1}}=(-1)^{w(p)}
\mbox{, \ \ thus we get that}
\end{equation*}
\begin{equation*}
c((N-k-1),N)=\sum_{\gamma\in
P_N(\emptyset,s)}\omega(\gamma)=\sum_{p\in
Par(k,N-k+1)}(-1)^{w(p)}.
\end{equation*}
\end{proof}

\begin{cor}{\em
 $e$ defines an infinitesimal deformation of the $N$-dga
 $(A,d)$ into the $N$-dga $(A, d+ e)$ if and only if
\begin{equation}
 \sum_{k=0}^{N-1}\left(\sum_{p\in
 Par(k,N-k+1)}(-1)^{w(p)}\right)d^{N-k-1}(e)d^{N-k-1}=0.
\end{equation}
}
\end{cor}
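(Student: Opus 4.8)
The plan is to read the Corollary off directly from the preceding Theorem, since all the combinatorial content is already packaged in that formula; what remains is an elementary observation about the ground ring $k[t]/(t^2)$. First I would pin down which of the $N$-dga axioms of Definition~\ref{Ndga} is actually at stake under the deformation $d\mapsto d+te$. The multiplication $m$ is left unchanged, so associativity (axiom~2) is automatic. The map $d$ is a degree one derivation by hypothesis and $e$ is a degree one derivation by assumption; since $t$ is a central formal scalar, $te$ is again a degree one derivation, and hence $d+te$ satisfies the graded Leibnitz rule (axiom~3). Consequently $(A\otimes k[t]/(t^2),\,m,\,d+te)$ is an $N$-dga over $k[t]/(t^2)$ if and only if the single remaining condition, the nilpotency $(d+te)^N=0$ of axiom~1, holds.

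Next I would substitute the evaluation supplied by the preceding Theorem, namely
\[
(d+te)^N \;=\; t \sum_{k=0}^{N-1}\left(\sum_{p\in Par(k,N-k+1)}(-1)^{w(p)}\right)d^{N-k-1}(e)\,d^{N-k-1}.
\]
Write $X$ for the operator standing to the right of $t$; by construction $X\in\mathrm{End}(A)$ carries no power of the parameter. Note that in deriving this formula the Theorem has already used $d^N=0$ (which holds because $(A,d)$ is an $N$-dga) to discard the $t^0$-term, so indeed $(d+te)^N=tX$ on the nose.

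The one step that requires any argument is the passage from $(d+te)^N=tX=0$ to $X=0$. This rests on the fact that $A\otimes k[t]/(t^2)=A\oplus At$ is free of rank two as an $A$-module, so extracting the coefficient of $t$ is well defined and faithful: multiplication by $t$ carries the summand $A$ isomorphically onto $At$, whence $tX$ is the zero operator exactly when $X$ is. Combining the three observations gives the desired equivalence, with $X=0$ being precisely the displayed equation of the Corollary. I do not expect a genuine obstacle here: the substantive work was done in proving the Theorem, and the Corollary is merely its restatement as a deformation-theoretic criterion, the only fresh ingredient being the faithfulness of the $t$-coefficient, which is routine.
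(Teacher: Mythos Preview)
Your proposal is correct and matches the paper's intent: the paper states the Corollary immediately after the Theorem with no proof at all, treating it as an obvious consequence, and you have simply spelled out that obvious step---checking that axioms~2 and~3 survive trivially, invoking the Theorem for axiom~1, and noting that $tX=0$ in $k[t]/(t^2)$ forces $X=0$. There is nothing to compare; your write-up is exactly the argument the paper leaves to the reader.
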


\section{ N Lie algebroids} \label{j}

In this section we introduce the notion of $N$ Lie algebroids and
construct examples of such structures. We first review the notion
of Lie algebroids, provide some examples, and write the definition
of Lie algebroids in a convenient way for our purposes.

\subsection*{Lie algebroids}

We review basic ideas around the notion of Lie algebroids; the
interested reader will find much more information in
\cite{{awac},{kchm},pra}. The notion of Lie algebroids has
gained much attention in the last few years because of its
interplay with various branches of mathematics and theoretical
physics, see
\cite{{jrcp06},jfc,{npl}}. We center our attention on
the basic definitions and constructions of Lie algebroids  and its
relation with graded manifolds and differential graded algebras.

\begin{defn}{\em
 A Lie algebroid is a vector bundle $\pi :E\longrightarrow M$
together with:
\begin{itemize}
\item{A Lie bracket $[\ \  ,\ \ ]$ on
the space $\Gamma(E)$ of sections of $E$.}
\item{A vector bundle map $\rho :E\longrightarrow TM$  over the identity,
called the anchor, such that the induced map $\rho: \Gamma(E)
\longrightarrow \Gamma(TM)$ is a Lie algebra morphism.}
\item{The identity $ [v,fw]= f[v,w]
+(\rho(v)f)w$ must hold for sections $v,w$ of $E$ and $f$ a smooth
function on $M$.}
\end{itemize}
}
\end{defn}

Let $(x^1,...,x^n)$ be coordinates on a local chart $U
\subset M$, and let $\{e_\alpha\mid
\alpha=1,\ldots,r\}$ be a basis of local sections of
$\pi: E|_{U}\longrightarrow U$.  Local coordinates on $E|_U$ are
given by $(x^i,y^\alpha)$. Locally the Lie bracket and the anchor
are given by $[e_{\alpha },e_{\beta }]_{E} =C_{\alpha
\beta}^{\gamma }\, e_{\gamma}$ and $\rho (e_{\alpha})
=\rho^i_{\alpha }\,\,\frac{\partial }{\partial x^{i}},$
respectively.  The smooth functions $C_{\alpha \beta }^{\gamma },
\rho^{i}_{\alpha }$ are  the structural functions
of the Lie algebroid. The condition for $\rho$ to be a Lie algebra
homomorphism is written in local coordinates as
\begin{equation*}
\rho^j_\alpha\,
\frac{\partial \rho^i_\beta}{\partial x^j}-\rho^j_\beta\,
\frac{\partial \rho^i_\alpha}{\partial x^j} =
\rho^i_\gamma \, C_{\alpha\,\beta}^\gamma.
\end{equation*}
The other compatibility condition between $\rho$ and $[\
\ ,\ \ ]$ is given by
 \begin{equation*} \label{est1}
\sum_{{\rm cycl}(\alpha,\beta,\gamma)}\left(\rho^{i}_\alpha\,
\frac{\partial C_{\beta\,\gamma}^\mu}{\partial x^i} + C_{\alpha\,\nu}^\mu
\,C_{\beta\,\gamma}^\nu\right)=0,
\end{equation*}
where the sum is over indices $\alpha,\beta,\gamma$ such that the
map $1,2,3 \longrightarrow \alpha,\beta,\gamma$ is a cyclic
permutation.  The simplest examples of Lie algebroids are
described below; the reader will find further examples in the
references listed at the beginning of this section.

\begin{exmp}\label{ej1algebra}{\em
 A finite dimensional Lie algebra $\goth{g}$ may be regarded
as a vector bundle over a single point. Sections are elements of
$\goth{g}$,  the Lie bracket is that of $\goth{g}$, and the anchor
map is identically zero. The structural functions $C_{\alpha
\beta}^{\gamma}$ are  the structural constants $c_{\alpha
\beta}^{\gamma}$ of ${\mathfrak{g}}$  and
$\rho^{i}_\alpha=0$.
}
\end{exmp}

\begin{exmp}\label{ej1tangente}{\em
 The tangent bundle $\pi:TM\longrightarrow M$ with anchor the
identity map $I_{TB}$  on $TB$ and with the usual bracket on
vector fields.
}
\end{exmp}

\subsection*{Exterior differential algebra of Lie algebroids}

Sections $\Gamma(\bigwedge E)$ of a Lie algebroid $E$ play the
r{\^o}le of
 vector fields on a manifold and are called $E$ vector fields.
Sections of the dual bundle $\pi:E^*\longrightarrow M$ are called
$E$ $1$-forms. Similarly sections $\Gamma (\bigwedge E^{*})$ of
$\bigwedge E^{*}$ are called $E$ forms.  The degree of a $E$ form
in $\Gamma (\bigwedge^{k}E^{*})$ is $k$. Let us state and sketch
the proof of a result of fundamental importance for the rest of
this work.

\begin{thm}\label{dla}{\em
 Let $E$ be a vector bundle. $E$ is a Lie algebroid if and
only if $\Gamma (\bigwedge E^{*})$  is a differential graded
algebra. A differential on $\bigwedge {E}^{*}$ is the same as a
degree one vector field $v$ on $E[-1]$ such that $v^2=0$.
}
\end{thm}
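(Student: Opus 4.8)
The plan is to establish the biconditional $E$ is a Lie algebroid $\iff$ $\Gamma(\bigwedge E^*)$ is a dga by constructing an explicit degree-one operator $d$ on $\Gamma(\bigwedge E^*)$ out of the anchor $\rho$ and the bracket $[\ ,\ ]$, and then showing that the algebraic conditions on $d$ (being a degree-one derivation with $d^2=0$) translate term-by-term into the defining axioms of a Lie algebroid. The standard device is the Koszul-type formula
\[
(d\omega)(v_0,\dots,v_k)=\sum_{i}(-1)^{i}\rho(v_i)\,\omega(v_0,\dots,\hat v_i,\dots,v_k)
+\sum_{i<j}(-1)^{i+j}\omega([v_i,v_j],v_0,\dots,\hat v_i,\dots,\hat v_j,\dots,v_k),
\]
for $\omega\in\Gamma(\bigwedge^k E^*)$ and $E$-vector fields $v_0,\dots,v_k$. **First I would** check that this $d$ is well defined (i.e. $C^\infty(M)$-multilinear in the right sense once the anchor-corrected Leibniz term is accounted for), has degree $+1$, and satisfies the graded Leibniz rule; the Leibniz property is essentially formal from the structure of the formula, using that $\rho$ is a derivation on functions.

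**The heart of the argument** is the equivalence $d^2=0\iff$ (Jacobi identity for $[\ ,\ ]$ together with $\rho$ being a bracket homomorphism). I would compute $d^2\omega$ on a collection of $E$-vector fields and collect terms. On functions $f\in C^\infty(M)=\Gamma(\bigwedge^0 E^*)$, the identity $d^2 f=0$ evaluated on $(v,w)$ reproduces exactly the condition that $\rho([v,w])=[\rho(v),\rho(w)]$, i.e. $\rho$ is a Lie algebra morphism (this is where the compatibility of $\rho$ with the bracket enters). On $E$-$1$-forms $\omega\in\Gamma(E^*)$, the vanishing of $d^2\omega$ evaluated on $(v_0,v_1,v_2)$ forces the Jacobi identity for the bracket, the cross terms organizing themselves into the cyclic sum $\sum_{\mathrm{cycl}}[[v_0,v_1],v_2]$. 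Because $d$ is a derivation and $C^\infty(M)$ together with $\Gamma(E^*)$ generate $\Gamma(\bigwedge E^*)$ as an algebra, vanishing of $d^2$ on these two degrees is sufficient to conclude $d^2=0$ everywhere; conversely, if $\Gamma(\bigwedge E^*)$ carries a degree-one differential with $d^2=0$, reading the above formulae backwards defines $\rho$ and $[\ ,\ ]$ and recovers the algebroid axioms.

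**The main obstacle** I anticipate is not any single computation but the bookkeeping of signs and the careful verification of $C^\infty$-linearity, since the anchor term $(\rho(v)f)w$ in the Leibniz rule for the bracket is precisely what makes $d$ well defined despite the apparent failure of tensoriality. I would isolate this by checking $d(f\omega)$ and the effect of replacing $v_i$ by $fv_i$ separately, confirming that the anchor corrections cancel. For the final sentence of the theorem, I would invoke the standard identification $\bigwedge E^*\cong$ functions on the supermanifold $E[-1]$, under which degree-one derivations of the algebra $\Gamma(\bigwedge E^*)$ correspond exactly to degree-one vector fields $v$ on $E[-1]$, and $d^2=0$ becomes $v^2=0$; this is a purely formal dictionary once the derivation picture of $d$ is in place, so I would state it and refer to the graded-manifold references rather than reprove it. Since the theorem is explicitly labeled as one to \emph{sketch}, I would present the two key degree computations ($d^2$ on functions and on $1$-forms) as the substance and treat the generation and sign-bookkeeping as routine.
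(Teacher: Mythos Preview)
Your proposal is correct and follows essentially the same approach as the paper: both use the Koszul-type formula to define $d$ from $(\rho,[\ ,\ ])$, and both verify the equivalence by checking $d^2$ in degrees $0$ and $1$ (the paper does this in local coordinates, writing $d^2x^i=0$ and $d^2e^\alpha=0$ as equivalent to the structural equations, whereas you phrase the same computations invariantly as $d^2$ on functions and on $1$-forms). Your treatment is in fact slightly more careful than the paper's sketch, since you explicitly flag the well-definedness check (cancellation of anchor terms under $v_i\mapsto fv_i$) and the generation argument, both of which the paper leaves implicit.
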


Above $E[-1]$ denotes the graded manifold whose underlying space
is $E$ with fibers placed in degree one. If $E$ is a Lie algebroid
one defines a differential
\begin{equation*}
d:\Gamma(\wedge^{k}E^*)\longrightarrow
\Gamma(\wedge ^{k+1}E^*)\
\end{equation*}
 as follows:
 \begin{eqnarray}
d\theta (v_{1},\ldots
,v_{k+1})&=&\sum_{i}(-1)^{i+1}\rho (v_{i})\theta (v_{1},\ldots ,\widehat{v}%
_{i},\ldots ,v_{k+1})  \nonumber \\
&+&\sum_{i<j}(-1)^{i+j}\theta ([v_{i},v_{j}],v_{1},\ldots
,\widehat{v}_{i},\ldots ,\widehat{v}_{j},\ldots v_{k+1}),
\nonumber
\end{eqnarray}
for $v_1,\ldots,v_{k+1}\in \Gamma(E)$.  The axioms for a Lie
algebroid imply that:
\begin{enumerate}
\item $d^{2}=0;$
\item If $f\in C^\infty(M)$ and $v
\in  \Gamma(E)$, then $\langle df,v\rangle =\rho (v)f;$
\item $d$ is
a derivation of degree $1,$ i.e., $d(\theta
\land \zeta )=d\theta \land \zeta +(-1)^{\overline{
\theta} }\theta \land \zeta.$
\end{enumerate}

Conversely, assume that $d$ is a degree one derivation on $\Gamma
(\bigwedge E^{*})$ satisfying $d^{2}=0$. Then $E$ is a Lie
algebroid with the structural maps $\rho$ and $[\
\ ,\ \ ]$ given by
\begin{eqnarray*}
& \rho(v)f =  df(v)\,,  & \\
& \theta([v,w])= \rho(v)\theta(w)-\rho(w)\theta(v)-d\theta(v,w),
\end{eqnarray*}
for $v,w \in\Gamma(E),\,f\in C^{\infty}(M)$ and $\theta\in
\Gamma(\bigwedge^{1}E)$. In local coordinates $d$ is determined by
\begin{equation*}
dx^{i}=\rho^i_{\alpha }\,e^{\alpha } \mbox{ and } de^{\gamma
}=C_{\alpha
\beta }^{\gamma}\, e^{\alpha }\land e^{\beta },
\end{equation*}
 where
$\{e^\alpha\mid \alpha=1,\ldots,r\}$ is the dual basis of
$\{e_\alpha\mid \alpha=1,\ldots,r\}$. It is not hard to see that
the conditions $d^{2}x^{i}=0$ and $d^{2}e^{\alpha }=0$ are
equivalent to the structural equations defining a Lie algebroid.
Let us compute the exterior algebra of a few Lie algebroids.

\begin{exmp}\label{trivial}{\em
To the trivial Lie algebroid structure on a vector bundle $E$
corresponds to the exterior algebra $\bigwedge {E}^{*}$ with
vanishing differential.
}
\end{exmp}

\begin{exmp}\label{ej2algebra}{\em
 Chevalley-Eilenberg differential on $\bigwedge
\mathfrak{g}^*$ arises from the Lie algebroid  ${\mathfrak{g}} \longrightarrow \left\lbrace
\bullet  \right\rbrace$ of Example
\ref{ej1algebra}. The Chevalley-Eilenberg differential $d$ takes the form
\begin{gather*}
d \theta (v_{1},\ldots, v_{k+1}) =\sum_{i<j}(-1)^{i+j}\theta
([v_{i},v_{j}],v_{1},\ldots ,\widehat{{v}_{i}},\ldots ,
\widehat{{v}_{j}},\ldots v_{k+1}) ,
\end{gather*}
for $v_{i}\in \mathfrak{g}$ and $\theta \in \bigwedge
{\mathfrak{g}}^{*}$.
}
\end{exmp}

\begin{exmp}\label{ej2tangente}{\em
 The differential associated with the tangent bundle $TM
\longrightarrow M$ Lie algebroid is de Rham differential.
}
 \end{exmp}

\subsection*{N Lie algebroids} \label{j3}

We are ready to introduce the main concept of this section. In the
light of Theorem \ref{dla} it is rather natural to define a $N$
Lie algebroid as a vector bundle $E$ together with a degree one
$N$-nilpotent vector field $v$ on the graded manifold $E[-1]$.
That definition, useful as it might be, rules out some significant
examples that we would not like to exclude, thus, we prefer the
more inclusive definition given below. Though not strictly
necessary for our definition of $N$ Lie algebroids, the study of
nilpotent vector fields on graded manifolds is of independent
interest, and we shall say a few words about them. Indeed our next
result gives an explicit formula for the $N$-th power of a graded
vector field.

Let $x^1,...,x^m$ be local coordinates on a graded manifold and
$\partial_1, ... ,\partial_m$ be the co\-rres\-pon\-ding vector
fields. We recall that if $x_i$ is a variable of degree
$\overline{x}_i$, then $\partial_i$ is of degree
$-\overline{x}_i$, and $dx_i$ is of degree $\overline{x}_i + 1$.
Let $a^1, ..., a^m$ be functions of homogeneous degree depending
on $x^1,...,x^m$. For $L$ a linearly ordered set and $f: L
\longrightarrow [m]$ a map we define
\begin{equation*}
\overline{f}=\sum_{i \in  L}\overline{f(i)} \mbox{ and }
\partial_f = \prod_{i \in L}\partial_{f(i)}.
\end{equation*}
Also we define the sign $s(f)$ by the rule
\begin{equation*}
\partial_f = s(f)\partial_{1}^{|f^{-1}(1)|} ...\ \
\partial_{m}^{|f^{-1}(m)|}.
\end{equation*}
Let $p: \mathbb{N} \longrightarrow \mathbb{Z}_2$ be the map such
that $p(n)$ is $1$ if $n$ is even and $-1$ otherwise. Using
induction on $N$ one can show that:
\begin{thm}{\em
\begin{equation*}
(a^i \partial_i)^N=\sum  s(f,\alpha)(\prod_{i=1}^N
\partial_{f|_{\alpha^{-1}(i)}}a^{f(i)})
\partial_{f|_{\alpha^{-1}(N+1) \sqcup N }},
\end{equation*}
where the sum runs over $f:[N] \longrightarrow [m]$ and $\alpha
:[N-1]
\longrightarrow [2,N+1]$ such that $\alpha(i) > i$. The sign
$s(f,\alpha)$ is given by
\begin{equation*}
s(f,\alpha)= p(\ \ \sum_{s=1}^{N-1}\sum_{s < j <
\alpha(s)}\overline{x}_s\overline{a}^{f(j)} +
\overline{x}_s \overline{f}|_{\alpha^{-1}(j)\cap [s+1,N-1]} \ \ ).
\end{equation*}
}
\end{thm}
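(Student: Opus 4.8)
The plan is to prove the formula by induction on $N$, exactly as the authors indicate, treating $(a^i\partial_i)^N = (a^i\partial_i)\cdot(a^i\partial_i)^{N-1}$ and carefully tracking how the outer application of the graded vector field $a^i\partial_i$ acts on the inductive expression. First I would establish the base case $N=1$, where the sum degenerates: there is a single map $f:[1]\longrightarrow[m]$ (one value), the index set $[N-1]=[0]$ for $\alpha$ is empty so there is a unique empty $\alpha$, the sign $s(f,\alpha)$ reduces to an empty product equal to $+1$, and the formula collapses to $a^{f(1)}\partial_{f(1)}=a^i\partial_i$, confirming the base case.

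For the inductive step I would apply one more copy of $a^j\partial_j$ on the left of the order-$(N-1)$ expression. The key mechanism is the graded Leibniz rule: when $\partial_j$ is pushed past the functions $\prod_{i}\partial_{f|_{\alpha^{-1}(i)}}a^{f(i)}$ and the trailing differential operator block, it either differentiates one of the coefficient functions $a^{f(i)}$ (possibly after first passing through earlier $\partial$-factors, producing Koszul signs that depend on the degrees $\overline{x}_s$ and the intervening $\overline{a}^{f(j)}$ and $\overline{f}$ terms), or it lands in the trailing operator block $\partial_{f|_{\alpha^{-1}(N+1)\sqcup N}}$. I would set up a bijective correspondence between (i) pairs $(f,\alpha)$ of order $N$ and (ii) the pair $(f',\alpha')$ of order $N-1$ together with the datum recording \emph{where} the new $\partial_j$ acts — i.e. which function in the product receives the derivative, or whether it joins the trailing operators. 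The constraint $\alpha(i)>i$ precisely encodes the admissible insertion positions, and the newly created index $N+1$ in the target $[2,N+1]$ corresponds to the freshly introduced variable of integration in the outer application.

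The heart of the argument, and the step I expect to be the main obstacle, is verifying that the Koszul signs reassemble correctly into $s(f,\alpha)=p(\sum_{s}\sum_{s<j<\alpha(s)}\overline{x}_s\overline{a}^{f(j)}+\overline{x}_s\,\overline{f}|_{\alpha^{-1}(j)\cap[s+1,N-1]})$. Each time $\partial_j$ commutes past a factor it contributes a sign governed by the degree of $\partial_j$, which is $-\overline{x}_j$, against the degree of the factor it crosses; summing these contributions over the whole insertion must match the increment in the double sum in the exponent when passing from order $N-1$ to order $N$. I would isolate the terms in the exponent indexed by the new value $s$ (the position where the outer derivative is inserted) and show that the inner sum $\sum_{s<j<\alpha(s)}(\cdots)$ exactly records the degrees of all factors that $\partial_j$ must traverse before acting, distinguishing the coefficient-function contributions $\overline{x}_s\overline{a}^{f(j)}$ from the operator contributions $\overline{x}_s\overline{f}|_{\alpha^{-1}(j)\cap[s+1,N-1]}$. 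The bookkeeping here is delicate because relabeling the order-$(N-1)$ data to order $N$ shifts indices, and one must confirm that the sign $s(f|_{\alpha^{-1}(i)})$ hidden in $\partial_{f|_{\alpha^{-1}(i)}}$ is compatible with the global normal-ordering sign $s(f)$; I would handle this by fixing a consistent linear order on each fiber $\alpha^{-1}(i)$ and checking that the Koszul signs from reordering are absorbed into the stated exponent, leaving no residual discrepancy.
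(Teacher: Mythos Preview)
Your approach is exactly what the paper does: it simply asserts ``Using induction on $N$ one can show that'' and states the formula with no further argument, so your induction via $(a^i\partial_i)^N=(a^i\partial_i)\cdot(a^i\partial_i)^{N-1}$, the Leibniz expansion, and the bookkeeping of Koszul signs is precisely the intended route. Your outline is in fact considerably more detailed than anything in the paper.
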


\begin{cor}{\em
\begin{equation*}
(a^i \partial_i)^N = \sum_{I}c_I \partial_I,
\end{equation*}
 where
$I:[m] \longrightarrow \mathbb{N}$ is such that $1 \leq |I|:=
I(1)+...+I(N)
\leq N$, $\partial_I =  \prod_{i=1}^m \partial_i^{I(i)}$, and
\begin{equation*}
c_I =\sum \ \ S(f,\alpha)\prod_{i=1}^N
(\partial_{f(\alpha^{-1}(i))}a^{f(i)})
\end{equation*}
where the sum runs over maps $\alpha :[N-1]
\longrightarrow [2,N+1]$ with $\alpha(i) > i$ for $i \in [N-1]$, and $f:[N] \longrightarrow [m]$ such that
$|\{j \in \alpha^{-1}(N+1) \sqcup \{N \} \ \ | \ \ f(j)=i
\}|= I(i),$ for $i \in [m]$. The sign $S(f,\alpha)$ is given by
\begin{equation*}
S(f,\alpha)= s(f,\alpha)s(f|_{\alpha^{-1}(N+1) \sqcup \{N \}})\,.
\end{equation*}
}
\end{cor}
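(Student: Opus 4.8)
The plan is to obtain the Corollary by simply collecting the terms of the explicit expansion just established according to the ordered monomial $\partial_I$ each of them contributes. The only manipulation needed is to rewrite the trailing differential operator of every summand in the canonical form $\partial_1^{I(1)}\cdots\partial_m^{I(m)}$ and to reindex the sum.

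First I would fix a single summand of the theorem, indexed by a pair $(f,\alpha)$ with $f:[N]\longrightarrow[m]$ and $\alpha:[N-1]\longrightarrow[2,N+1]$ satisfying $\alpha(i)>i$. Writing $L=\alpha^{-1}(N+1)\sqcup\{N\}$, regarded as a linearly ordered subset of $[N]$, the trailing operator is exactly $\partial_{f|_L}$. Applying the defining rule of the sign $s$ to the restricted map $f|_L:L\longrightarrow[m]$ gives
\[
\partial_{f|_L}=s(f|_L)\,\partial_1^{I(1)}\cdots\partial_m^{I(m)}=s(f|_L)\,\partial_I,\qquad I(i)=|\{j\in L\mid f(j)=i\}|.
\]
Hence the summand equals $S(f,\alpha)\bigl(\prod_{i=1}^N\partial_{f(\alpha^{-1}(i))}a^{f(i)}\bigr)\partial_I$ with $S(f,\alpha)=s(f,\alpha)s(f|_L)$, the coefficient factor being carried along unchanged (note $\partial_{f(\alpha^{-1}(i))}$ and $\partial_{f|_{\alpha^{-1}(i)}}$ denote the same product of partials).

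Next I would group the summands by the resulting multi-index $I$. For fixed $I$ the contributing pairs $(f,\alpha)$ are precisely those with $|\{j\in\alpha^{-1}(N+1)\sqcup\{N\}\mid f(j)=i\}|=I(i)$ for all $i\in[m]$, which is the index condition in the statement; summing $S(f,\alpha)\prod_{i=1}^N\partial_{f(\alpha^{-1}(i))}a^{f(i)}$ over them produces $c_I$. To obtain the degree range, note the trailing operator carries $|L|=|\alpha^{-1}(N+1)|+1$ partials, so $|I|=|\alpha^{-1}(N+1)|+1$; since $\alpha$ takes $[N-1]$ into $[2,N+1]$, the fiber $\alpha^{-1}(N+1)$ has between $0$ and $N-1$ elements, giving $1\leq|I|\leq N$.

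The argument is essentially bookkeeping, so the one point demanding care is the sign. I must verify that in the graded setting---where the $\partial_i$ anticommute when the corresponding $x^i$ are odd---the reordering of $\partial_{f|_L}$ into canonical form introduces exactly the Koszul sign $s(f|_L)$ and nothing more, which is precisely what the defining relation for $s$ encodes. With this checked, $S(f,\alpha)=s(f,\alpha)s(f|_L)$ is correct and no further signs intervene, since the coefficient factor is not reordered.
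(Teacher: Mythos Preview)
Your proposal is correct and is precisely the intended derivation: the paper states this Corollary without proof, as an immediate regrouping of the preceding Theorem's expansion by collecting terms according to the canonical monomial $\partial_I$ they contribute, which is exactly what you carry out. The only nontrivial point---that reordering $\partial_{f|_L}$ into $\partial_I$ produces the sign $s(f|_L)$ and hence the composite sign $S(f,\alpha)=s(f,\alpha)s(f|_L)$---you identify and handle correctly via the defining relation for $s$.
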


\begin{cor}{\em
$(a^i \partial_i)^N =0$ if and only if $c_I=0$ for $I$ as above.
}
\end{cor}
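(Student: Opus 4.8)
The plan is to read the statement off from the explicit expansion $(a^i\partial_i)^N = \sum_I c_I\partial_I$ furnished by the previous corollary. The only point needing justification is that this expansion presents the operator in \emph{normal form}---every function coefficient written to the left, and each derivative monomial $\partial_I = \prod_{i=1}^m\partial_i^{I(i)}$ taken in a fixed order---and that distinct normal-ordered monomials $\partial_I$ are linearly independent over the algebra $\mathcal{O}$ of functions on the graded manifold. Granting this, the \emph{if} direction is immediate: if every $c_I$ vanishes then the right-hand side is the zero operator. The \emph{only if} direction is precisely the assertion that $\sum_I c_I\partial_I = 0$ forces $c_I=0$ for all $I$, which is the linear independence just described.

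First I would establish that independence. Work in a local chart and fix a base point $q$ with coordinates normalized so that $x^i(q)=0$. For a multi-index $K$ form the monomial $x^K = \prod_i(x^i)^{K(i)}$, and record the key computation $\partial_I(x^K)|_q = \pm\,K!\,\delta_{I,K}$, where $K!=\prod_i K(i)!$ and the sign bookkeeps the reordering of the graded derivatives. Indeed $\partial_I x^K$ is a constant multiple of $x^{K-I}$, which vanishes at $q$ unless $K=I$ and is identically zero whenever $I(i)>K(i)$ for some $i$. For an odd coordinate $x^i$ one has $(x^i)^2=0$ and $\partial_i^2=0$, so only the exponents $K(i)\in\{0,1\}$ occur and the formula persists with $K(i)!=1$.

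Now suppose $\sum_I c_I\partial_I = 0$ as an operator on $\mathcal{O}$. Applying it to $x^K$ and evaluating at $q$ annihilates every term except $I=K$, leaving $\pm\,K!\,c_K(q)=0$, hence $c_K(q)=0$. Since $q$ ranges over an arbitrary chart and the charts cover the manifold, $c_K=0$ for every $K$, which completes both the independence and the corollary.

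The hard part will be the bookkeeping of the graded signs: one must verify that the order in which the $\partial_i$ are multiplied to form $\partial_I$ in the previous corollary matches the order used in the test evaluation $\partial_I(x^K)|_q$, so that the sign attached to $K!\,\delta_{I,K}$ is a genuine unit rather than an accidental zero. Because the derivatives of distinct odd coordinates anticommute while those of even coordinates commute, this is exactly the (super) Poincar\'e--Birkhoff--Witt normal-ordering statement for the algebra of differential operators, and no new difficulty is introduced by the $N$-fold product beyond what is already recorded in the range $1\le|I|\le N$ of the sum.
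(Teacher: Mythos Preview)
Your argument is correct. The paper itself offers no proof of this corollary at all: it is stated as an immediate consequence of the preceding expansion $(a^i\partial_i)^N=\sum_I c_I\,\partial_I$, and the authors evidently regard the linear independence of the normal-ordered monomials $\partial_I$ over the function algebra as a standing fact about differential operators on graded manifolds. What you have done is supply that missing justification explicitly, via the standard test-function/evaluation argument (apply the operator to $x^K$ and evaluate at a chosen base point). This is the right idea and it works.

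Two small remarks. First, your worry that the sign in $\partial_I(x^K)|_q=\pm K!\,\delta_{I,K}$ might be an ``accidental zero'' is unnecessary: reordering graded derivatives only produces a factor $\pm1$, never $0$, so the coefficient is always a unit. Second, for odd coordinates you correctly restrict to $K(i)\in\{0,1\}$; it is worth noting that the range $1\le|I|\le N$ in the previous corollary already respects this, since $\partial_i^2=0$ for odd $\partial_i$ forces $I(i)\le 1$ in any nonvanishing $\partial_I$. With those clarifications your proof is complete and in fact more careful than the paper's treatment.
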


For example for $N=2$ one gets
\begin{equation*}
(a^i \partial_i)^2=
\sum_{i,j}p(\overline{x}_i
\overline{a}_j)a_i a_j \partial_i \partial_j  +  a_i \partial_i
(a_j) \partial_j.
\end{equation*}
For $N=3$ we get that
\begin{eqnarray*}
(a^i \partial_i)^3&=& \sum_{i,j,k} a_i
\partial_i(a_j)\partial_j(a_k) \partial_k
+ p(\overline{x}_i \overline{a}_j) a_i a_j \partial_i\partial_j(a_k) \partial_k\\
&+&p(\overline{x}_i \overline{a}_k) a_i a_j \partial_j(a_k)
\partial_i \partial_k + p(\overline{x}_j \overline{a}_k) a_i
\partial_i(a_j)a_k \partial_j\partial_k\\
&+&\overline{x}_i\overline{a}_j) a_i a_j \partial_i(a_k)
\partial_j\partial_k + p(\overline{x}_j\overline{a}_k +
\overline{x}_i\overline{a}_j +
\overline{x}_i \overline{a}_k) a_i a_j a_k \partial_i
\partial_j\partial_k.
\end{eqnarray*}
For $N=4$ the corresponding expression have $24$ terms and we
won't  spell it out.

We return to the problem of defining $N$ Lie algebroids. We need
some general remarks on differential operators on associative
algebras. Given an associative algebra $A$ we let $DO(A)$ be the
algebra of differential operators on $A$, i.e., the subalgebra of
$End(A)$ ge\-ne\-ra\-ted by $A \subset End(A)$ and $Der(A)\subset
End(A),$ the space of derivations of $A$. Thus $DO(A)$ is
ge\-ne\-ra\-ted as a vector space by operators of the form $x_1
\circ x_2 \circ \cdots \circ x_n \in End(A)$ where $x_i$ is in $A
\sqcup Der(A).$ Notice that $DO(A)$ admits a natural filtration
\begin{equation*}
\emptyset=DO_{\leq-1}(A) \subseteq DO_{\leq 0}(A)\subseteq DO_{\leq 1}(A)
\subseteq \cdots \subseteq DO_{\leq k}(A) \subseteq  \cdots \subseteq DO(A),
\end{equation*}
where $DO_{\leq k}(A) \subseteq DO(A)$ is the subspace generated
by operators $x_1 \circ x_2 \circ \cdots
\circ x_n,$ where at most $k$ operators among the $x_i$ belong to
$Der(A).$ Thus $DO(A)$ admits the following decomposition as
graded vector space
\begin{equation*}
DO(A)=\bigoplus_{k=0}^{\infty} DO_{k}(A):=
\bigoplus_{k=0}^{\infty} DO_{\leq k}(A)/DO_{\leq k-1}(A).                                   \end{equation*}
Clearly $DO_{0}(A)=A$ and if $A$ is either commutative or graded
commutative, then
\begin{equation*}
DO_{1}(A)=Der(A).
\end{equation*}
The projection map $\pi_1: DO(A) \longrightarrow DO_1(A)$ induces
a non-associative product
\begin{equation*}
\diamond:DO_1(A)\otimes DO_1(A)
\longrightarrow DO_1(A)
\end{equation*}
given by $s\diamond t = \pi_1( s \circ t)$ for $s,t \in DO_1(A).$
In particular if $A$ is commutative or graded commutative we
obtain a non-associative product
\begin{equation*}
\diamond:Der(A) \otimes Der(A) \longrightarrow Der(A).
\end{equation*}
To avoid unnecessary use of parenthesis we assume that in the
iterated applications of $\diamond$ we associate in the minimal
form from right to left.

\begin{defn}{\em
A $N$ Lie algebroid is a vector bundle $E$ together with a degree
one derivation $d:\Gamma (\bigwedge E^{*})
\longrightarrow \Gamma (\bigwedge E^{*})$, such that the result of
$N$ $\diamond$-compositions of $d$ with itself vanishes, i.e.,
$d\diamond d \diamond \cdots \diamond d=0.$
}
\end{defn}

The notions of Lie algebroids and $2$ Lie algebroids agree; indeed
it is easy to check that $d \circ d = d \diamond d$ for any degree
one derivation $d:\Gamma (\bigwedge E^{*}) \longrightarrow \Gamma
(\bigwedge E^{*}).$  Let us now illustrate with an example the
difference between the condition $d \circ d
\circ \cdots \circ d=0$ and the much weaker condition $d \diamond d
\diamond \cdots \diamond d=0.$ Let $\mathbb{C}[x_1,...,x_n]$ be
the free graded algebra generated by graded variables $x_i$ for $1
\leq i \leq n.$ A derivation on $\mathbb{C}[x_1,...,x_n]$ is  a
vector field $\partial=\sum a_i
\partial_i$ where $a_i \in \mathbb{C}[x_1,...,x_n]$. The condition $\partial^N=0$ is rather strong
and restrictive, it might be tackled with the methods provided
above. In contrast, the condition $\partial
\diamond \partial \diamond \cdots \diamond \partial=0$ is much
simpler and indeed it is equivalent to the condition
$\partial^N(x_i)=0$ for $1 \leq i \leq n$.

\begin{defn}{\em
A $N$ Lie algebra is a vector space $\mathfrak{g}$ together with a
degree one derivation $d$ on $\bigwedge
\mathfrak{g}^{*}$ such that the $N$-th $\diamond$-composition of $d$ with itself
vanishes.
}
\end{defn}

Our next result characterizes $3$ Lie algebras in more familiar
terms. For integers $k_1, k_2,..., k_l$ such that $k_1+ k_2 +
\cdots + k_l = n$, we let $Sh(k_1,k_2,
\cdots, k_l)$ be the set of permutations
\begin{equation*}
\sigma:\{1,\cdots, n\} \longrightarrow \{1,\cdots, n\}
\end{equation*}
such that $\sigma$ is increasing on the intervals $[k_{i} +1,
k_{i+1}]$ for $0 \leq i
\leq l$, $k_0=1$ and $k_{l+1}=n.$ Assume we are given a map
$[\ \  , \ \ ]: \bigwedge^2 \mathfrak{g} \longrightarrow
\mathfrak{g}$.

\begin{thm}\label{3la}{\em  The pair $(\mathfrak{g}, [\ \ , \ \ ])$ is a $3$ Lie algebra if and only if
for $v_1, v_2, v_3, v_4 \in \mathfrak{g}$ we have
\begin{equation*}
\sum_{\sigma \in
Sh(2,1,1)}sgn(\sigma)[[[v_{\sigma(1)},
v_{\sigma(2)}],v_{\sigma(3)}]v_{\sigma(4)}]=
\sum_{\sigma \in Sh(2,2)}sgn(\sigma)[[v_{\sigma(1)},v_{\sigma(2)}],[v_{\sigma(3)},v_{\sigma(4)}]],            \end{equation*}
}
\end{thm}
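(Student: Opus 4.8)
The plan is to unwind the definition of a $3$ Lie algebra down to a pointwise identity on $\mathfrak{g}$ by first reducing the $\diamond$-condition to an ordinary power condition on generators, and then expanding that power with the Chevalley--Eilenberg formula. Since $\bigwedge\mathfrak{g}^{*}$ is the free graded-commutative algebra on the odd generators $e^{\alpha}$ dual to a basis of $\mathfrak{g}$, the remark preceding the statement applies verbatim: the condition $d\diamond d\diamond d=0$ is equivalent to $d^{3}(e^{\alpha})=0$ for every $\alpha$, i.e. to $(d^{3}\theta)(v_{1},\dots,v_{4})=0$ for all $\theta\in\mathfrak{g}^{*}$ and all $v_{1},\dots,v_{4}\in\mathfrak{g}$. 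The reason is that $d\diamond d\diamond d=\pi_{1}(d^{3})$ is a derivation, hence determined by its values on generators, while the higher-order components $\pi_{k}(d^{3})$ with $k\ge 2$ annihilate a single generator; so the only surviving information is $d^{3}$ evaluated on $1$-forms.

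Next I would make the bracket-to-differential dictionary explicit. For the Lie algebroid $\mathfrak{g}\to\{\bullet\}$ of Theorem~\ref{dla} the anchor vanishes, so the differential reduces to the Chevalley--Eilenberg operator, and on a $1$-form one finds $d\theta(v_{1},v_{2})=-\theta([v_{1},v_{2}])$. Iterating once gives the $3$-form $d^{2}\theta(w_{1},w_{2},w_{3})=\theta([[w_{1},w_{2}],w_{3}])-\theta([[w_{1},w_{3}],w_{2}])+\theta([[w_{2},w_{3}],w_{1}])$. The core computation is then the third iteration: writing $(d^{3}\theta)(v_{1},\dots,v_{4})=\sum_{i<j}(-1)^{i+j}(d^{2}\theta)([v_{i},v_{j}],v_{k},v_{l})$ with $\{k,l\}$ the complementary pair in increasing order, and substituting the displayed $3$-form formula, produces $6\times 3=18$ signed terms $\pm\theta(T)$.

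The decisive structural observation is that each of the six slots contributes terms of exactly two tree shapes: two \emph{fully nested} terms $\theta([[[v_{a},v_{b}],v_{c}],v_{d}])$ and one \emph{balanced} term $\theta([[v_{k},v_{l}],[v_{i},v_{j}]])$. Collecting the twelve nested terms and the six balanced terms, I would identify them, \emph{with their signs}, as antisymmetrizations of the two reference trees over $S_{4}$ modulo the signed symmetries of each shape; since $|S_{4}|/2=12=|Sh(2,1,1)|$ and the analogous count for the order-$4$ subgroup $\{e,(12),(34),(12)(34)\}$ gives $|Sh(2,2)|$, the two collections become precisely the left-hand and right-hand shuffle sums of the statement. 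Thus $(d^{3}\theta)(v_{1},\dots,v_{4})$ equals, up to an overall sign, $\theta$ applied to the difference of the two shuffle sums, and its vanishing for all $\theta$ is exactly the asserted equality.

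The main obstacle is the sign bookkeeping in that last step: one must check that the factor $(-1)^{i+j}$ carried by each slot, combined with the internal $\pm1$ from the $d^{2}\theta$ formula and with the antisymmetry of the innermost (and, for the balanced shape, the outermost) bracket, reproduces $sgn(\sigma)$ for the corresponding shuffle $\sigma$. I would handle this by fixing a canonical representative for each shuffle, verifying the sign on it, and transporting along transpositions. A useful consistency check is to track how the balanced terms coming from complementary slots (for instance $(1,2)$ and $(3,4)$) combine under outer antisymmetry, which both pins down the $Sh(2,2)$ normalization and confirms that no spurious tree shapes survive.
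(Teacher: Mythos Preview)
Your proposal is correct and follows essentially the same route as the paper: both arguments reduce the $\diamond$-condition to the vanishing of $d^{3}\theta$ for $\theta\in\mathfrak{g}^{*}$ (you justify this reduction explicitly via the remark on generators, while the paper simply evaluates $d\diamond d\diamond d$ on a $1$-form), and both then expand via the iterated Chevalley--Eilenberg formula to obtain the difference of the $Sh(2,1,1)$ and $Sh(2,2)$ shuffle sums. Your treatment of the $18$-term bookkeeping and the identification of the two tree shapes with the two shuffle families is more detailed than the paper's, but the underlying computation is the same.
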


\begin{proof} One can show that a degree one
differential on $\bigwedge \mathfrak{g}^{*}$ is necessarily the
Chevalley-Eilenberg operator
\begin{equation*}
d \theta (v_{1},\ldots, v_{n+1}) =\sum_{i<j}(-1)^{i+j}\theta
([v_{i},v_{j}],v_{1},\ldots ,\widehat{v}_{i},\ldots ,
\widehat{v}_{j},\ldots v_{n+1})\ ,
\end{equation*}
where $[\ \ , \ \ ]: \bigwedge^2 \mathfrak{g} \longrightarrow
\mathfrak{g}$ is an antisymmetric operator. We remark that
we are not assuming, at this point,  that the bracket $[\ \
 , \ \ ]$  satisfies any further identity. Jacobi identity arises when
the square of $d$ is set to be equal to zero, but we do not do
that since we want to investigate the weaker condition that the
third $\diamond$-power of $d$ be equal to zero. For $\theta
\in {\bigwedge}^1 \mathfrak{g}^*= \mathfrak{g}^*$ the
Chevalley-Eilenberg operator takes the simple form
\begin{equation*}
d\theta(v_1,v_2)=-\theta([v_1,v_2]).
\end{equation*}
Moreover a further application of $d$ to $d\theta$ yields
\begin{equation*} d^2 \theta (v_{1},v_2, v_{3}) =\sum_{\sigma \in
Sh(2,1)}sgn(\sigma)\theta
([[v_{\sigma(1)},v_{\sigma(2)}],v_{\sigma(3)}]).
\end{equation*}
From the last equation it is evident that Jacobi identity is
equivalent to the condition $d^2=0$. We do not assume assume that
Jacobi identity holds and proceed to compute the third
$\diamond$-power of $d$. We obtain that
\begin{eqnarray*}
d\diamond d \diamond d\theta(v_1,v_2,v_3,v_4) &=&
\sum_{\sigma \in Sh(2,1,1)}sgn(\sigma)\theta([[[v_{\sigma(1)}, v_{\sigma(2)}],v_{\sigma(3)}]v_{\sigma(4)}])  \\
&-&  \sum_{\sigma \in
Sh(2,2)}sgn(\sigma)\theta([[v_{\sigma(1)},v_{\sigma(2)}],[v_{\sigma(3)},v_{\sigma(4)}]]).
\end{eqnarray*}
Thus $d\diamond d \diamond d=0$ if and only if the condition from
the statement of the Theorem holds.
\end{proof}

Using local coordinates $\theta^1,...,\theta^m$ on the graded
manifold $\mathfrak{g}[-1]$, it is not hard to show that a vector
field of degree one on $\mathfrak{g}[-1]$ can  be written as
\begin{eqnarray*}
\partial=\frac{1}{2} C_{\alpha\,\beta}^{\gamma}
\theta^{\alpha}
\theta^{\beta} \frac{\partial}{\partial \theta^{\gamma}}
\end{eqnarray*}
where the constants $C_{\alpha\,\beta}^{\gamma}$ may be identified
with the structural constants of $[\cdot , \cdot  ]$. The square
of the vector field $\partial$ is given by
\begin{eqnarray*}
\partial^{2}  &=& \left(  \frac{1}{2} C_{\alpha\,\beta}^{\gamma}
\theta^{\alpha} \theta^{\beta} \dfrac{\partial}{\partial
\theta^{\gamma}} \right) \left( \frac{1}{2}
C_{\delta\,\varepsilon}^{\sigma} \theta^{\delta}
\theta^{\varepsilon} \dfrac{\partial}{\partial \theta^{\sigma}}
\right)\\
&=&\frac{1}{4} C_{\alpha\,\beta}^{\gamma}
C_{\gamma\,\varepsilon}^{\sigma} \theta^{\alpha} \theta^{\beta}
\theta^{\varepsilon} \dfrac{\partial}{\partial \theta^{\sigma}} -
\frac{1}{4} C_{\alpha\,\beta}^{\gamma} C_{\delta \,\gamma}^{\sigma}
 \theta^{\alpha} \theta^{\beta} \theta^{\delta} \dfrac{\partial}{\partial \theta^{\sigma}} +
\frac{1}{2} C_{\alpha\,\beta}^{\gamma} \theta^{\alpha} \theta^{\beta}
 C_{\delta\,\varepsilon}^{\sigma} \theta^{\delta} \theta^{\varepsilon}
 \dfrac{\partial}{\partial \theta^{\gamma}} \dfrac{\partial}{\partial \theta^{\sigma}}.
\end{eqnarray*}
Using the antisymmetry properties of $C_{\alpha\,\beta}^{\gamma}$
and the commutation rules for $\theta^{\alpha} $ one can write
together the first to terms. We find that
\begin{equation*}
\partial \diamond \partial=\frac{1}{2} C_{\alpha\,\beta}^{\gamma}
C_{\gamma\,\varepsilon}^{\sigma} \theta^{\alpha} \theta^{\beta}
\theta^{\varepsilon} \frac{\partial}{\partial \theta^{\sigma}}\,.
\end{equation*}
 The condition $\partial \diamond \partial=0$ is equivalent to Jacobi identity.
 We assume that $\partial \diamond \partial \neq 0$ and proceed to compute consider the condition
 $\partial \diamond \partial \diamond \partial=0$. We have that
\begin{equation*}
\partial \circ (\partial \diamond \partial)= \left( \frac{1}{2} C_{\lambda\,\mu}^{\nu} \theta^{\lambda}
\theta^{\mu} \frac{\partial}{\partial \theta^{\nu}} \right) \left(
\frac{1}{2} C_{\alpha\,\beta}^{\gamma}
C_{\gamma\,\varepsilon}^{\sigma} \theta^{\alpha} \theta^{\beta}
\theta^{\varepsilon} \frac{\partial}{\partial \theta^{\sigma}}
\right).
\end{equation*}
Using carefully the properties of $ C_{\alpha\,\beta}^{\gamma}$
and $\theta^{\alpha}$ we find that
\begin{eqnarray*}
\partial \circ (\partial \diamond \partial)&=& \frac{1}{2}C_{\lambda\,\mu}^{\nu} C_{\nu\,\beta}^{\gamma}
C_{\gamma\,\varepsilon}^{\sigma} \theta^{\lambda} \theta^{\mu}
\theta^{\beta} \theta^{\varepsilon} \frac{\partial}{\partial
\theta^{\sigma}}\\
&+&\frac{1}{4}C_{\lambda\,\mu}^{\nu} C_{\alpha,\beta}^{\gamma}
C_{\gamma\,\nu}^{\sigma} \theta^{\lambda} \theta^{\mu}
\theta^{\alpha} \theta^{\beta} \frac{\partial}{\partial
\theta^{\sigma}}\\
&+&\frac{1}{4}C_{\lambda\,\mu}^{\nu} C_{\alpha\,\beta}^{\gamma}
C_{\gamma\,\varepsilon}^{\sigma} \theta^{\lambda}
\theta^{\mu} \theta^{\alpha} \theta^{\beta}
\theta^{\varepsilon} \frac{\partial}{\partial
\theta^{\nu}}\frac{\partial}{\partial \theta^{\sigma}}.
\end{eqnarray*}
Therefore we have shown that
\begin{equation*}
\partial \diamond (\partial \diamond \partial)=\left( \frac{1}{2} C_{\lambda\,\mu}^{\nu} C_{\nu\,\beta}^{\gamma}
C_{\gamma\,\epsilon}^{\sigma} - \frac{1}{4} C^{\gamma}_{\lambda
\mu} C^{\sigma}_{\gamma \alpha} C^{\alpha}_{\beta \epsilon}\right)
\theta^{\lambda} \theta^{\mu}
\theta^{\beta} \theta^{\varepsilon} \frac{\partial}{\partial
\theta^{\sigma}}.
\end{equation*}
Thus the condition $\partial \diamond (\partial
\diamond \partial)=0$ is equivalent to the following equations for fixed
$\sigma$:
\begin{equation*}
\sum_{\lambda, \mu , \beta , \varepsilon}\left( \frac{1}{2} C_{\lambda\,\mu}^{\nu} C_{\nu\,\beta}^{\gamma}
C_{\gamma\,\epsilon}^{\sigma} - \frac{1}{4} C^{\gamma}_{\lambda
\mu} C^{\sigma}_{\gamma \alpha} C^{\alpha}_{\beta \epsilon}\right)
\theta^{\lambda} \theta^{\mu}
\theta^{\beta} \theta^{\varepsilon}  =0 \,.
\end{equation*}

Let us now go back to the case of  Lie algebroids as opposed to
Lie algebras. There is a natural degree one vector field on the
graded manifold $T_{[-1]}\mathbb{R}^n$, namely, de Rham
differential. We now investigate whether it is possible to deform,
infinitesimally, de Rham differential into a $3$-differential. In
local coordinates $(x_1, \dots , x_n,
\theta_1, \dots,\theta_n)$ on $T_{[-1]}\mathbb{R}^n$, with $x_i$ of degree zero
and $\theta_i$ of degree $1$,  de Rham operator takes the form
\begin{equation*}
\partial=\delta^{i}_{\alpha}
\theta^{\alpha}
\frac{\partial}{\partial x^{i}}.
\end{equation*}
Let $t$ be a formal infinitesimal parameter such that $t^2=0$. We
are going to show that any set of functions $a^{i}_{\alpha}$ of
degree zero on $T_{[-1]}\mathbb{R}^n$ determine a deformation of
de Rham operator into a $3$-$\diamond$ nilpotent operator given by
\begin{equation*}
\partial_{a}=\left(
\delta^{i}_{\alpha} + t a^{i}_{\alpha}
\right) \theta^{\alpha} \frac{\partial}{\partial x^{i}}.
\end{equation*}
\begin{thm}{\em
$\partial_{a} \diamond \partial_{a}= t \,\dfrac{\partial
a^{j}_{\beta}}{\partial x^{\alpha}}\, \theta^{\alpha}
\,\theta^{\beta}
\dfrac{\partial}{\partial x^{j}}$ and $\partial_{a} \diamond (\partial_{a} \diamond
\partial_{a})=0.$
}
\end{thm}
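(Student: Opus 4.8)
The plan is to reduce both claims to the $N=2$ expansion $(a^i\partial_i)^2=\sum_{i,j}p(\overline{x}_i\overline{a}_j)a_ia_j\partial_i\partial_j+a_i\partial_i(a_j)\partial_j$ recorded above, together with the definition $s\diamond t=\pi_1(s\circ t)$ and the fact that, since the underlying algebra is graded commutative, $\pi_1$ annihilates every differential operator of order $\geq 2$. First I would write $\partial_a=c^i\,\frac{\partial}{\partial x^i}$ with $c^i=\theta^i+t\,a^i_\alpha\theta^\alpha$, a function of degree one. Since $x^i$ has degree zero, each $\frac{\partial}{\partial x^i}$ has degree zero, so every sign $p(\overline{x}_i\overline{c}^{\,j})$ equals $p(0)=1$, and no signs will intervene when the $\partial/\partial x^i$ are moved across functions.

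For the first identity I would apply the $N=2$ formula to $\partial_a\circ\partial_a$, obtaining the order-two symbol $\sum_{i,j}c^ic^j\partial_i\partial_j$ plus the order-one term $c^i\partial_i(c^j)\partial_j$. Applying $\pi_1$ kills the symbol, so $\partial_a\diamond\partial_a=c^i\partial_i(c^j)\partial_j$. Because $\partial_i=\partial/\partial x^i$ differentiates only the $x$-dependence, $\partial_i(c^j)=t\,(\partial_i a^j_\beta)\theta^\beta$; since $t^2=0$, only the $\theta^i$ summand of $c^i$ survives against this factor of $t$, and the degree-zero coefficient $\partial_i a^j_\beta$ commutes past $\theta^i$. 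This yields $t\,\frac{\partial a^j_\beta}{\partial x^i}\theta^i\theta^\beta\partial_j$, which is the asserted formula after renaming the dummy index $i$ as $\alpha$.

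For the second identity I would set $D=\partial_a\diamond\partial_a=d^j\partial_j$ with $d^j=t\,\frac{\partial a^j_\beta}{\partial x^\alpha}\theta^\alpha\theta^\beta$. Composing, $\partial_a\circ D$ splits into an order-two part $c^id^j\partial_i\partial_j$, killed by $\pi_1$, and the order-one part $c^i\partial_i(d^j)\partial_j$. Here $\partial_i(d^j)=t\,\frac{\partial^2 a^j_\beta}{\partial x^i\partial x^\alpha}\theta^\alpha\theta^\beta$, and once more $t^2=0$ leaves only $\theta^i\partial_i(d^j)=t\,\frac{\partial^2 a^j_\beta}{\partial x^i\partial x^\alpha}\theta^i\theta^\alpha\theta^\beta$. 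The crux is then a symmetry cancellation: the mixed second partial $\frac{\partial^2 a^j_\beta}{\partial x^i\partial x^\alpha}$ is symmetric in $(i,\alpha)$, whereas the odd variables satisfy $\theta^i\theta^\alpha=-\theta^\alpha\theta^i$, so the summation over $i$ and $\alpha$ vanishes. Hence $\partial_a\diamond(\partial_a\diamond\partial_a)=\pi_1(\partial_a\circ D)=0$.

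The routine part is the graded Leibniz sign bookkeeping, which is painless here precisely because the $x^i$, and hence the $\partial_i$, are even. The one genuinely load-bearing step, and the only place the specific structure of $\partial_a$ is used, is the final antisymmetry argument: equality of mixed partials contracted against the anticommuting $\theta$'s forces the triple $\diamond$-product to vanish. This is exactly the geometric content of the statement, namely that \emph{every} infinitesimal deformation of the de Rham differential of this form is automatically $3$-$\diamond$-nilpotent.
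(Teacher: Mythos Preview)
Your argument is correct and follows essentially the same route as the paper: expand $\partial_a\circ\partial_a$ directly, discard the second-order piece, use $t^2=0$ to isolate the surviving first-order term, and then kill the triple $\diamond$-product by the symmetry of mixed partials against the antisymmetry of $\theta^i\theta^\alpha$. The only cosmetic difference is that you invoke $\pi_1$ explicitly to remove the order-two symbol, whereas the paper phrases the same step as a ``contraction of even and odd indices''; your formulation is arguably cleaner, but the substance is identical.
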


\begin{proof}
\begin{eqnarray*}
\partial_{a}^{2}&=&\left( \delta^{i}_{\alpha} + t a^{i}_{\alpha}
\right) \theta^{\alpha} \dfrac{\partial}{\partial x^{j}}
\left( \delta^{j}_{\beta} + t a^{j}_{\beta} \right) \theta^{\beta} \dfrac{\partial}{\partial x^{j}}\\
&=&  t \dfrac{\partial a^{j}_{\beta}}{\partial x^{\alpha}}\,
\theta^{\alpha} \,\theta^{\beta}
\dfrac{\partial}{\partial x^{j}} + t \left( a^{i}_{\alpha} \delta^{j}_{\beta} \right)
\theta^{\alpha} \,\theta^{\beta} \dfrac{\partial}{\partial x^{i}}
 \dfrac{\partial}{\partial x^{j}} + t^{2}
 \left(  a^{i}_{\alpha} \dfrac{\partial a^{j}_{\beta}}{\partial x^{i}}\right)
 \,\theta^{\alpha}\, \theta^{\beta}\, \dfrac{\partial}{\partial
 x^{j}}.
\end{eqnarray*}
Since $t^{2}=0$ the third term on the right hand side of the
expression above vanishes. The second term also vanishes because
it is a contraction of even and odd indices. So we get that
\begin{equation*}
\partial_{a} \diamond \partial_{a} =t \,\dfrac{\partial a^{j}_{\beta}}{\partial
x^{\alpha}}\, \theta^{\alpha}
\,\theta^{\beta}
\dfrac{\partial}{\partial x^{j}}\,.
\end{equation*}
The third power of $\partial_{a}$ is given by
\begin{equation*}
\partial_{a} \diamond (\partial_{a} \diamond \partial_{a})= t \frac{\partial^{2} a^{j}_{\beta}}{\partial x^{\gamma}
\partial x^{\alpha}} \theta^{\gamma}\, \theta^{\alpha}\,
\theta^{\beta} \frac{\partial}{\partial x^{j}}=0.
\end{equation*}
It also vanishes because it includes a contraction of even and odd
indices.
\end{proof}

The nilpotency condition for the operator $\partial_{a} \diamond
\partial_{a}$ is $\dfrac{\partial a^{j}_{\beta}}{\partial x^{\alpha}}\,
\theta^{\alpha}\,\theta^{\beta}=0$
for $j=1,\ldots , n$. It is not hard to find examples of matrices
$a^{j}_{\beta}$ such that $\partial_{a}
\diamond \partial_{a} =0$, for example
\begin{equation*}
a=
\left[ \begin {array}{cccc} x^{1}&\frac{(x^{4})^{2}}{2}&x^{1}&x^{1}\\\noalign{\medskip}x^{2}&x^{2}&x^{3}&x^{2}
\\\noalign{\medskip}x^{3}&x^{3}&x^{2}&x^{4}\\\noalign{\medskip}x^{4}&x^{4}\,x^{1}&x^{4}&x^{3}\end {array} \right] \,.
\end{equation*}
More importantly there are also matrices $a^{j}_{\beta}$ such that
$\partial_{a} \diamond \partial_{a}\neq0$, for example
\begin{equation*}
a=
 \left[ \begin {array}{cccc} x^{1}\,x^{4}&x^{1}&x^{1}&x^{1}\\\noalign{\medskip}x^{2}&x^{2}\,x^{4}&x^{2}&x^{2}
\\\noalign{\medskip}x^{3}&x^{3}&x^{3}\,x^{4}&x^{3}\\\noalign{\medskip}x^{4}&x^{4}&x^{4}&x^{1}\,x^{4}
\end {array} \right] .
\end{equation*}

We now consider full deformations as opposed to infinitesimal
ones. Let
\begin{equation*}
\partial_a=\left(
\delta^{i}_{\alpha} + a^{i}_{\alpha}
\right) \theta^{\alpha} \dfrac{\partial}{\partial x^{i}}
\end{equation*}
 be a vector field. We think of $\partial_a$ as a deformation of de Rham
differential with deformation parameters $a^{i}_{\alpha}.$

\begin{thm}{\em  \begin{equation*}
\partial_{a} \diamond (\partial_{a} \diamond \partial_{a})=\left( \delta^{l}_{\gamma} + a^{l}_{\gamma} \right)
\left\lbrace \dfrac{\partial a^{i}_{\alpha}}{\partial x^{l}} \,
\dfrac{\partial a^{j}_{\beta}}{\partial x^{i}} + a^{i}_{\alpha}
\dfrac{\partial^{2} a^{j}_{\beta}}{\partial x^{l} \partial x^{i}}
\right\rbrace \theta^{\gamma} \theta^{\alpha} \theta^{\beta}
\dfrac{\partial}{\partial x^{j}}.
\end{equation*}
}
\end{thm}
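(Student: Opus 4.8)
The plan is to unwind the two $\diamond$-products from the inside out, using at each stage that $s\diamond t=\pi_1(s\circ t)$: form the ordinary operator composition and then project onto the first-order part $DO_1(A)=Der(A)$, discarding everything of order $\geq 2$ in the derivations $\partial/\partial x^i$. The computation is governed by two elementary facts: each $\partial/\partial x^i$ has degree zero and so commutes with the odd generators $\theta^\alpha$, whence the only signs that appear come from transposing the $\theta$'s; and $\partial/\partial x^i$ obeys the ordinary Leibniz rule against the coefficient functions $a^i_\alpha$, which depend only on the $x$'s.

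First I would compute the inner product $\partial_a\diamond\partial_a$. Writing $b^i_\alpha=\delta^i_\alpha+a^i_\alpha$, the composition $\partial_a\circ\partial_a$ splits by Leibniz into a first-order piece $b^i_\alpha\,(\partial a^j_\beta/\partial x^i)\,\theta^\alpha\theta^\beta\,\partial/\partial x^j$ and a second-order piece $b^i_\alpha b^j_\beta\,\theta^\alpha\theta^\beta\,(\partial/\partial x^i)(\partial/\partial x^j)$. Applying $\pi_1$ annihilates the second-order piece; moreover its leading constant-coefficient part $\theta^\alpha\theta^\beta\,\partial_\alpha\partial_\beta$ already vanishes on its own, being a contraction of the symmetric $\partial_\alpha\partial_\beta$ against the antisymmetric $\theta^\alpha\theta^\beta$. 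This leaves $\partial_a\diamond\partial_a=b^i_\alpha\,(\partial a^j_\beta/\partial x^i)\,\theta^\alpha\theta^\beta\,\partial/\partial x^j$, which specializes to the infinitesimal formula already obtained and thereby serves as a consistency check.

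Next I would treat $P:=\partial_a\diamond\partial_a$ as a single degree-one derivation whose coefficient of $\partial/\partial x^j$ is the $2$-form $\widetilde c^{\,j}=b^i_\alpha\,(\partial a^j_\beta/\partial x^i)\,\theta^\alpha\theta^\beta$, and repeat the procedure for $\partial_a\diamond P=\pi_1(\partial_a\circ P)$. The identical splitting gives, after the second-order part is discarded, $\partial_a\diamond P=b^l_\gamma\,(\partial\widetilde c^{\,j}/\partial x^l)\,\theta^\gamma\,\partial/\partial x^j$, so that expanding $\partial\widetilde c^{\,j}/\partial x^l$ by Leibniz produces
\[
(\delta^l_\gamma+a^l_\gamma)\left\{\frac{\partial a^i_\alpha}{\partial x^l}\frac{\partial a^j_\beta}{\partial x^i}+b^i_\alpha\,\frac{\partial^2 a^j_\beta}{\partial x^l\partial x^i}\right\}\theta^\gamma\theta^\alpha\theta^\beta\,\frac{\partial}{\partial x^j}.
\]

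The final step, which I expect to be the main obstacle, is the simplification of these terms against the total antisymmetry of $\theta^\gamma\theta^\alpha\theta^\beta$. The delicate point is that a second-derivative term whose coefficient is symmetric in its two differentiation indices is killed when those two indices are carried by two distinct $\theta$'s: the $\delta^i_\alpha$ summand of $b^i_\alpha$ contributes $\frac{\partial^2 a^j_\beta}{\partial x^l\partial x^\alpha}$, which is symmetric in $(l,\alpha)$, and the portion of it in which $l$ is in turn carried by $\theta^\gamma$ (via $\delta^l_\gamma$) then pairs a symmetric coefficient in $(\gamma,\alpha)$ against the antisymmetric $\theta^\gamma\theta^\alpha$ and drops out. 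Determining precisely which of these symmetric contributions are annihilated in this way, so that the factor $b^i_\alpha$ reduces to $a^i_\alpha$, while keeping every sign arising from the transpositions of the $\theta$'s, is the heart of the calculation and is what yields the asserted formula.
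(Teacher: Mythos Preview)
Your overall approach is exactly the paper's: compute $\partial_a\diamond\partial_a$ by composing and projecting onto first order, then hit the result once more with $\partial_a$ and project again. Your intermediate formula
\[
(\delta^l_\gamma+a^l_\gamma)\Bigl\{\tfrac{\partial a^i_\alpha}{\partial x^l}\tfrac{\partial a^j_\beta}{\partial x^i}+b^i_\alpha\,\tfrac{\partial^2 a^j_\beta}{\partial x^l\partial x^i}\Bigr\}\theta^\gamma\theta^\alpha\theta^\beta\,\tfrac{\partial}{\partial x^j},\qquad b^i_\alpha=\delta^i_\alpha+a^i_\alpha,
\]
is precisely what a straightforward Leibniz expansion gives, and the paper's own proof passes from the line before it directly to the asserted formula with $a^i_\alpha$ in place of $b^i_\alpha$ without comment. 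So at the level of strategy there is nothing to add.

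The gap is the last step, and it is a real one. Your symmetry argument disposes of only the $\delta^l_\gamma\delta^i_\alpha$ piece: that piece is $\tfrac{\partial^2 a^j_\beta}{\partial x^\gamma\partial x^\alpha}\theta^\gamma\theta^\alpha\theta^\beta$, symmetric in $(\gamma,\alpha)$, hence zero. The remaining piece $a^l_\gamma\,\tfrac{\partial^2 a^j_\beta}{\partial x^l\partial x^\alpha}\,\theta^\gamma\theta^\alpha\theta^\beta$ has no such symmetry in $(\gamma,\alpha)$, and it does not vanish in general. For instance, on $\mathbb{R}^3$ with $a^3_1=1$, $a^1_3=x^2x^3$ and all other $a^j_\beta=0$, this residual term equals $\theta^1\theta^2\theta^3\,\partial/\partial x^1\neq 0$. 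Thus the reduction $b^i_\alpha\rightsquigarrow a^i_\alpha$ that you flag as ``the heart of the calculation'' cannot be carried out by antisymmetry alone; your intermediate formula with $b^i_\alpha$ is the honest answer, and the displayed identity in the statement (which the paper's proof reaches by the same unexplained jump) should be read with $\delta^i_\alpha+a^i_\alpha$ in the second-derivative term.
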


\begin{proof}
Since
\begin{eqnarray*}
\partial_{a}^{2}&=& \left[\left( \delta^{i}_{\alpha} + a^{i}_{\alpha} \right)
\theta^{\alpha} \dfrac{\partial}{\partial x^{i}}
\right]\left( \delta^{j}_{\beta} + a^{j}_{\beta} \right) \theta^{\beta} \dfrac{\partial}{\partial
x^{j}}\\
\partial_{a} \diamond \partial_{a}&=& \left( \delta^{i}_{\alpha} + a^{i}_{\alpha}  \right)
\dfrac{\partial a^{j}_{\beta}}{\partial x^{i}} \theta^{\alpha}
\theta^{\beta} \dfrac{\partial}{\partial x^{j}},
\end{eqnarray*}
we get
\begin{eqnarray*}
\partial_{a} \diamond (\partial_{a} \diamond \partial_{a}) &=& \left( \delta^{l}_{\gamma} + a^{l}_{\gamma} \right)
\theta^{\gamma}
\dfrac{\partial}{\partial x^{l}} \diamond  \left[ \left( \delta^{i}_{\alpha}
+ a^{i}_{\alpha}  \right) \dfrac{\partial a^{j}_{\beta}}{\partial
x^{i}} \theta^{\alpha}
\theta^{\beta} \dfrac{\partial}{\partial x^{j}}  \right] \\
&=& \left( \delta^{l}_{\gamma} + a^{l}_{\gamma} \right)
\left\lbrace
\dfrac{\partial a^{i}_{\alpha}}{\partial x^{l}} \, \dfrac{\partial
a^{j}_{\beta}}{\partial x^{i}} + a^{i}_{\alpha}
\dfrac{\partial^{2} a^{j}_{\beta}}{\partial x^{l} \partial x^{i}}
\right\rbrace \theta^{\gamma} \theta^{\alpha} \theta^{\beta}
\dfrac{\partial}{\partial x^{j}}.
\end{eqnarray*}
\end{proof}
\begin{cor}{\em
$\partial_{a} \diamond (\partial_{a} \diamond \partial_{a})=0$ if
for fixed indices $\alpha,\beta,
\lambda, j$ the following identity holds
\begin{equation*}
\left( \delta^{l}_{\gamma} + a^{l}_{\gamma} \right)
\left\lbrace
\dfrac{\partial a^{i}_{\alpha}}{\partial x^{l}} \, \dfrac{\partial
a^{j}_{\beta}}{\partial x^{i}} + a^{i}_{\alpha}
\dfrac{\partial^{2} a^{j}_{\beta}}{\partial x^{l} \partial x^{i}}
\right\rbrace  \theta^{\gamma} \theta^{\alpha}
\theta^{\beta}=0.
\end{equation*}
}
\end{cor}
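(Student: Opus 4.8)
The plan is to compute the double $\diamond$-power by applying twice the elementary recipe behind $\diamond$: form the genuine composition in $DO(A)$ and then project onto first order with $\pi_1$, the point being that any monomial carrying two factors $\partial/\partial x$ lies in $DO_{\le 2}(A)$ and is therefore annihilated by $\pi_1$. Writing $b^i_\alpha=\delta^i_\alpha+a^i_\alpha$ as a local abbreviation, so that $\partial_a=b^i_\alpha\theta^\alpha\,\partial/\partial x^i$, I would first expand $\partial_a\circ\partial_a$ by the graded Leibniz rule. This splits into a first-order piece, in which $\partial/\partial x^i$ differentiates the coefficient $b^j_\beta$, and a second-order piece carrying $\partial^2/\partial x^i\partial x^j$. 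Since $\pi_1$ discards the latter and $\partial\delta^j_\beta/\partial x^i=0$, what survives is exactly the intermediate identity $\partial_a\diamond\partial_a=b^i_\alpha\,(\partial a^j_\beta/\partial x^i)\,\theta^\alpha\theta^\beta\,\partial/\partial x^j$ recorded in the proof above.

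Next I would regard $\partial_a\diamond\partial_a$ as a single degree-one derivation $C^j\,\partial/\partial x^j$ with coefficient $C^j=b^i_\alpha\,(\partial a^j_\beta/\partial x^i)\,\theta^\alpha\theta^\beta$, and run the same recipe with $\partial_a$ on the left. The composition $\partial_a\circ(C^j\,\partial/\partial x^j)$ again separates into a first-order part, where $\partial/\partial x^l$ hits $C^j$, and a second-order part carrying $\partial^2/\partial x^l\partial x^j$ that $\pi_1$ removes. The only computation is the Leibniz expansion $\partial C^j/\partial x^l=(\partial a^i_\alpha/\partial x^l)(\partial a^j_\beta/\partial x^i)+b^i_\alpha\,(\partial^2 a^j_\beta/\partial x^l\partial x^i)$, using once more that the $\delta$'s are constant and that the odd variables $\theta$ do not depend on the $x$'s, while the $\theta^\gamma$ coming from the outer $\partial_a$ is carried unchanged to the front. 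This produces $\partial_a\diamond(\partial_a\diamond\partial_a)$ in the form $b^l_\gamma\{(\partial a^i_\alpha/\partial x^l)(\partial a^j_\beta/\partial x^i)+b^i_\alpha\,(\partial^2 a^j_\beta/\partial x^l\partial x^i)\}\theta^\gamma\theta^\alpha\theta^\beta\,\partial/\partial x^j$.

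It then remains to replace $b^i_\alpha$ by $a^i_\alpha$ in the second-derivative term, i.e.\ to see that the $\delta^i_\alpha$ contribution does not survive, and this is where I expect the real work to lie. The mechanism is the total antisymmetry of $\theta^\gamma\theta^\alpha\theta^\beta$: only the part of each coefficient that is antisymmetric in $(\gamma,\alpha,\beta)$ can contribute. Setting $i=\alpha$ in the $\delta^i_\alpha$ piece yields the coefficient $b^l_\gamma\,\partial^2 a^j_\beta/\partial x^l\partial x^\alpha$, whose purely-$\delta$ part $\partial^2 a^j_\beta/\partial x^\gamma\partial x^\alpha$ is symmetric in $\gamma\leftrightarrow\alpha$ and is therefore killed against $\theta^\gamma\theta^\alpha$; I would then carry out the remaining sign and antisymmetry bookkeeping to confirm that the whole $\delta^i_\alpha$ term collapses, leaving precisely the asserted formula. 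The main obstacle is exactly this accounting: tracking through two successive compositions which monomials are first- versus second-order so that $\pi_1$ is applied correctly, carrying the graded signs through every reordering of the $\theta$'s, and invoking the antisymmetry of $\theta^\gamma\theta^\alpha\theta^\beta$ to eliminate the Kronecker-delta contributions. None of these steps is conceptually deep, but the order and sign accounting must be done with care for the identity to emerge in the clean stated shape.
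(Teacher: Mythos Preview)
Your overall strategy coincides with the paper's: the corollary is read off directly from the explicit formula for $\partial_a\diamond(\partial_a\diamond\partial_a)$ established in the preceding theorem, and your proposal essentially re-derives that formula step by step. Up through the expression
\[
b^l_\gamma\Bigl\{\tfrac{\partial a^i_\alpha}{\partial x^l}\,\tfrac{\partial a^j_\beta}{\partial x^i}+b^i_\alpha\,\tfrac{\partial^2 a^j_\beta}{\partial x^l\partial x^i}\Bigr\}\theta^\gamma\theta^\alpha\theta^\beta\,\tfrac{\partial}{\partial x^j}
\]
your computation is correct and matches the paper's.

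The gap is exactly where you place it, but it does not close the way you anticipate. To pass from $b^i_\alpha$ to $a^i_\alpha$ in the second-derivative term you must show that the $\delta^i_\alpha$ contribution
\[
(\delta^l_\gamma+a^l_\gamma)\,\tfrac{\partial^2 a^j_\beta}{\partial x^l\partial x^\alpha}\,\theta^\gamma\theta^\alpha\theta^\beta
\]
vanishes. You correctly observe that the $\delta^l_\gamma$ part dies because $\partial^2 a^j_\beta/\partial x^\gamma\partial x^\alpha$ is symmetric in $(\gamma,\alpha)$. But the remaining piece
\[
a^l_\gamma\,\tfrac{\partial^2 a^j_\beta}{\partial x^l\partial x^\alpha}\,\theta^\gamma\theta^\alpha\theta^\beta
\]
is \emph{not} killed by antisymmetry: the coefficient $T_{\gamma\alpha\beta}=a^l_\gamma\,\partial^2 a^j_\beta/\partial x^l\partial x^\alpha$ has no symmetry in any pair of the indices $(\gamma,\alpha,\beta)$. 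A concrete obstruction in dimension $n=3$ is $a^1_1=\tfrac12(x^2)^2$, $a^2_3=1$, all other entries zero; then among all permutations of $(1,2,3)$ only $T_{3,2,1}$ is nonzero, so the alternating sum survives and the term contributes a nonzero multiple of $\theta^1\theta^2\theta^3\,\partial/\partial x^1$. Thus the ``remaining sign and antisymmetry bookkeeping'' you defer cannot succeed, and the replacement $b^i_\alpha\rightsquigarrow a^i_\alpha$ is not a consequence of antisymmetry alone. The paper's own proof of the theorem glides over this same point without comment; your more careful computation has in fact surfaced a genuine issue with the stated formula rather than a step that mere bookkeeping will resolve.
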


\begin{cor}{\em
Each matrix $A=(A^{j}_{\beta})\in M_n(\mathbb{R})$ such that
$A^2=0$ determines a $3$ Lie algebroid structure on
$T\mathbb{R}^n$ with differential given by $(\delta^{i}_{\alpha}
+A^{i}_{\alpha}x_{\alpha})dx^{\alpha}
\frac{\partial}{\partial x^{i}}.$
}
\end{cor}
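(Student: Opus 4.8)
The plan is to deduce the corollary from the Theorem giving the closed form of $\partial_a \diamond (\partial_a \diamond \partial_a)$, specialized to the affine vector field with $a^i_\alpha = A^i_\alpha x_\alpha$, and then to read the vanishing off from the hypothesis $A^2=0$. First I would record the derivatives that feed into that formula. Because each $a^j_\beta = A^j_\beta x_\beta$ is linear, all of its second derivatives vanish, so the quadratic term $a^i_\alpha \frac{\partial^2 a^j_\beta}{\partial x^l \partial x^i}$ inside the braces drops out entirely; for the remaining term one has $\frac{\partial a^i_\alpha}{\partial x^l} = A^i_\alpha \delta^l_\alpha$ and $\frac{\partial a^j_\beta}{\partial x^i} = A^j_\beta \delta^i_\beta$, with no summation inside either factor.

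Substituting these into the Theorem and performing the two contractions imposed by the Kronecker deltas (set $i=\beta$, then $l=\alpha$) collapses the expression to
\[
\partial_a \diamond (\partial_a \diamond \partial_a) = \left( \delta^\alpha_\gamma + A^\alpha_\gamma x_\gamma \right) A^\beta_\alpha A^j_\beta \, \theta^\gamma \theta^\alpha \theta^\beta \, \frac{\partial}{\partial x^j}.
\]
The contribution of the term $\delta^\alpha_\gamma$ forces $\gamma=\alpha$ and hence carries the factor $\theta^\alpha \theta^\alpha = 0$; thus only the genuinely deformed part survives, and the corollary is reduced to proving, for each fixed $j$, the Grassmann identity
\[
A^\alpha_\gamma A^\beta_\alpha A^j_\beta \, x_\gamma \, \theta^\gamma \theta^\alpha \theta^\beta = 0.
\]
This is exactly the sufficient condition isolated in the previous Corollary, now specialized to $a^i_\alpha = A^i_\alpha x_\alpha$.

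Finally I would bring in $A^2=0$. The middle index $\alpha$ is contracted through $A^\alpha_\gamma A^\beta_\alpha$, whose sum over $\alpha$ is the matrix entry $(A^2)^\beta_\gamma$, which vanishes by hypothesis; equivalently, writing $\phi(\theta^\gamma) := \sum_\alpha A^\alpha_\gamma \theta^\alpha$ for the operator induced by $A$ on one-forms, the condition $A^2=0$ says precisely $\phi^2=0$. The main obstacle is that this contraction cannot be performed naively, because the summation index $\alpha$ simultaneously labels the odd generator $\theta^\alpha$: the scalar identity $\sum_\alpha A^\alpha_\gamma A^\beta_\alpha = 0$ must be reconciled with the total antisymmetry of $\theta^\gamma \theta^\alpha \theta^\beta$. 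I would handle this by regrouping the surviving three-form so that the nilpotency of $\phi$ is applied to a wedge that is forced to degenerate; the model case is $A = u v^{t}$ of rank one, where the coefficient factors as $u_j \big( \sum_\gamma v_\gamma x_\gamma \theta^\gamma \big) \wedge \big( \sum_\alpha u_\alpha v_\alpha \theta^\alpha \big)^{\wedge 2}$, the square of a single one-form, which is automatically zero. Extending this collapse from the rank-one model to a general $A$ with $A^2=0$ is the delicate point, and it is where I would concentrate the real work.
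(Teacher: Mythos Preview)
Your reduction is exactly the route the paper implicitly intends: specialize the preceding Theorem to $a^{i}_{\alpha}=A^{i}_{\alpha}x_{\alpha}$, kill the second--derivative term, and use $\theta^{\alpha}\theta^{\alpha}=0$ to discard the $\delta^{\alpha}_{\gamma}$ piece. The paper offers no argument beyond listing the corollary, so up to the displayed identity
\[
\sum_{\gamma,\alpha,\beta} A^{\alpha}_{\gamma}\,A^{\beta}_{\alpha}\,A^{j}_{\beta}\,x_{\gamma}\,\theta^{\gamma}\theta^{\alpha}\theta^{\beta}=0
\]
your write--up and the paper agree.

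The gap you flag at the end is real, and it cannot be closed in the generality claimed. Your own diagnosis is correct: the summation index $\alpha$ is tied to the odd generator $\theta^{\alpha}$, so the hypothesis $\sum_{\alpha}A^{\beta}_{\alpha}A^{\alpha}_{\gamma}=0$ is not directly available. In the rank--one case the expression factors as a square of a single one--form and dies, as you show; but this mechanism does not persist for rank two. Concretely, for $n=4$ and
\[
A=\begin{pmatrix} -1 & 1 & 0 & 0\\ -1 & 1 & 0 & 0\\ -2 & 1 & 0 & 1\\ -1 & 1 & 0 & 0 \end{pmatrix},
\]
one has $A^{2}=0$, yet the $\partial/\partial x^{3}$--component of $\partial_{a}\diamond(\partial_{a}\diamond\partial_{a})$ computed from your reduced formula is $(x_{2}-x_{1})\,\theta^{1}\theta^{2}\theta^{4}\neq 0$. (The only surviving triples are $(\gamma,\alpha,\beta)=(1,2,4)$ and $(2,1,4)$, contributing $-x_{1}$ and $+x_{2}$ respectively.) Hence the Grassmann identity you isolate is \emph{false} for this $A$, and no amount of ``real work'' will extend the rank--one argument.

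So your proposal is correct as far as it goes, and it correctly locates the obstruction; but the obstruction is fatal. Under the natural reading $a^{i}_{\alpha}=A^{i}_{\alpha}x_{\alpha}$, the corollary holds for $A$ of rank $\leq 1$ (your argument) and fails in general. Either the paper's statement needs an additional hypothesis (e.g.\ rank one, or a different placement of the factor $x$ in the differential), or the intended $a^{i}_{\alpha}$ is not the one you---quite reasonably---read off.
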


Our final result describes explicitly the conditions defining a
$3$ Lie algebroid. Let $E$ be a vector bundle over $M$. A vector
field on $E[-1]$ of degree one is given in local coordinates by
\begin{equation*}
\partial=
\rho^{i}_{\alpha}
\theta^{\alpha} \frac{\partial}{\partial x^{i}} + \frac{1}{2}
C_{\alpha\,\beta}^{\gamma} \theta^{\alpha}
\theta^{\beta} \frac{\partial}{\partial \theta^{\gamma}}
\end{equation*}
where $\rho^{i}_{\alpha}$ and $C_{\alpha\,\beta}^{\gamma}$ are
functions of the bosonic variables only.

\begin{thm}{\em
$\partial \diamond (\partial \diamond \partial)=0$ if and only if
for fixed $\gamma$ and $i$ the following identity holds:
\begin{eqnarray*}
& \left[ \dfrac{1}{2}\rho^{j}_{\nu} \dfrac{\partial}{\partial x^{j}} \left(\rho^{i}_{\beta} \dfrac{\partial C^{\gamma}_{\sigma\, \mu}}{\partial x^{i}} \right) + \dfrac{1}{2} \, \rho^{j}_{\nu} \dfrac{\partial ( C^{\gamma}_{\alpha\, \beta} C^{\alpha}_{\sigma\, \mu} )}{\partial x^{j}} +\dfrac{1}{2}\, \rho^{i}_{\beta} \dfrac{\partial C^{\gamma}_{\lambda \, \mu}}{\partial x^{i}} C^{\lambda}_{\mu \sigma} -\dfrac{1}{4} \rho^{i}_{\beta} C^{\beta}_{\nu \sigma} \dfrac{\partial C^{\gamma}_{\lambda \mu}}{\partial x^{i}} +\right. & \\
&  \left. +\,\left( \dfrac{1}{2}\,C^{\gamma}_{\alpha \beta}
C^{\alpha}_{\lambda \mu} C^{\lambda}_{\nu \sigma} -
 \dfrac{1}{4}C^{\alpha}_{\beta \mu} C^{\gamma}_{\alpha \epsilon} C^{\epsilon}_{\nu \sigma}  \right)  \right]\theta^{\nu} \theta ^{\sigma} \theta^{\mu} \theta^{\beta} =0 \,,     &
\end{eqnarray*}
\begin{eqnarray*}
& \left[  \rho^{l}_{\gamma} \dfrac{\partial}{\partial x^{l}}
\left( \rho^{j}_{\nu} \dfrac{\partial \rho^{i}_{\gamma}}{\partial
x^{j}} \right) +\dfrac{1}{2}\left( \rho^{l}_{\sigma}
\dfrac{\partial}{\partial x^{l}} \left( \rho^{i}_{\alpha}
C^{\alpha}_{\nu \gamma} \right)+ \rho^{j}_{\epsilon}
\dfrac{\partial \rho^{i}_{\gamma}}{\partial x^{j}}
C^{\epsilon}_{\sigma \nu} - \right. \right. & \\
&\left. \left. \qquad \qquad \qquad \qquad \qquad \quad -
\rho^{j}_{\gamma} \dfrac{\partial \rho^{i}_{\epsilon}}{\partial
x^{j}} C^{\epsilon}_{\sigma \nu} +
  \rho^{i}_{\alpha} C^{\alpha}_{\beta \gamma} C^{\beta}_{\sigma \nu}
\right)  \right] \theta ^{\sigma} \theta^{\nu} \theta ^{\gamma} =0\,.
&
\end{eqnarray*}
}
\end{thm}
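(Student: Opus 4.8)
The plan is to decompose the degree one vector field as $\partial = \partial_\rho + \partial_C$, where $\partial_\rho = \rho^i_\alpha\theta^\alpha\frac{\partial}{\partial x^i}$ is the anchor part and $\partial_C = \frac{1}{2}C^\gamma_{\alpha\beta}\theta^\alpha\theta^\beta\frac{\partial}{\partial\theta^\gamma}$ is the bracket part, and then to run the same two-step computation used in the preceding theorems, now with both pieces switched on. Recall that $s\diamond t = \pi_1(s\circ t)$ extracts the derivation, that is the $DO_1$, part of the ordinary composition. Since $\partial$ is an odd derivation, $\partial^2$ is again a derivation, so $\partial\diamond\partial = \partial^2$; however $\partial\circ(\partial\diamond\partial)$ is genuinely second order, and the projection $\pi_1$ will discard nonzero $DO_{\leq 2}$ contributions. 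The two coordinate sectors $\frac{\partial}{\partial x^i}$ and $\frac{\partial}{\partial\theta^\gamma}$ are independent, so a derivation on $\Gamma(\bigwedge E^*)$ vanishes if and only if the full $\theta$-polynomial coefficient of each $\frac{\partial}{\partial x^i}$ and of each $\frac{\partial}{\partial\theta^\gamma}$ vanishes; this is what will produce the two displayed identities.

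First I would compute $\partial\diamond\partial$ by expanding $\partial^2 = \partial_\rho^2 + \partial_\rho\partial_C + \partial_C\partial_\rho + \partial_C^2$ and keeping only the single-derivative terms. The block $\partial_\rho^2$ contributes the anchor term $\rho^i_\alpha\frac{\partial\rho^j_\beta}{\partial x^i}\theta^\alpha\theta^\beta\frac{\partial}{\partial x^j}$, exactly as in the full deformation computation for $\partial_a$ above; the block $\partial_C^2$ contributes $\frac{1}{2}C^\gamma_{\alpha\beta}C^\sigma_{\gamma\varepsilon}\theta^\alpha\theta^\beta\theta^\varepsilon\frac{\partial}{\partial\theta^\sigma}$, exactly as in the local coordinate computation following Theorem \ref{3la}; and the two mixed blocks produce the genuinely new couplings, namely a $\frac{\partial}{\partial\theta^\sigma}$ term in which $\partial_\rho$ differentiates the $x$-dependent structure functions $C$, and a $\frac{\partial}{\partial x^i}$ term coming from $\partial_C$ acting on the $\theta$ in $\partial_\rho$. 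In each block the purely second-order pieces vanish by the antisymmetry of $C$ and the anticommutativity of the $\theta$'s, consistently with $\partial^2$ being a derivation. Thus $\partial\diamond\partial$ is again of the form (anchor coefficient)$\,\theta\theta\,\frac{\partial}{\partial x}$ plus (bracket coefficient)$\,\theta\theta\theta\,\frac{\partial}{\partial\theta}$, with both coefficients now mixing $\rho$ and $C$.

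Next I would apply $\partial\diamond(\ \cdot\ )$ to this intermediate field, that is, compute $\partial\circ(\partial\diamond\partial)$ and again retain only its $DO_1$ part. Because $\partial$ carries both a $\frac{\partial}{\partial x}$ and a $\frac{\partial}{\partial\theta}$, each of the two sectors of $\partial\diamond\partial$ feeds into both sectors of the final answer: the $\frac{\partial}{\partial\theta^\gamma}$ coefficient collects a $\theta$-degree four expression built from the three-$C$ expression found in the proof of Theorem \ref{3la} together with the mixed $\rho$--$C$ terms in which $\partial_\rho$ differentiates the bracket data, while the $\frac{\partial}{\partial x^i}$ coefficient collects a $\theta$-degree three expression built from the three-$\rho$ anchor term of the $\partial_a$ computation together with its mixed $\rho$--$C$ corrections. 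Setting the whole field to zero and using the independence of the two sectors gives exactly the two stated identities, the first being the vanishing of the $\frac{\partial}{\partial\theta^\gamma}$ coefficient for each fixed $\gamma$ and the second the vanishing of the $\frac{\partial}{\partial x^i}$ coefficient for each fixed $i$.

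The main obstacle is the sign and index bookkeeping in this second composition rather than any conceptual difficulty. One must repeatedly graded-commute the odd generators $\theta^\alpha$ past the odd operators $\frac{\partial}{\partial\theta^\gamma}$, carefully track the Leibniz action of $\partial_\rho$ on products of the form $C(x)\,\theta\theta\,\frac{\partial}{\partial\theta}$, and relabel dummy indices so that terms differing only by the antisymmetry of $C^\gamma_{\alpha\beta}$ can be merged, which is the origin of the $\frac{1}{2}$ versus $\frac{1}{4}$ coefficients appearing in the statement. At every stage one must also separate the genuine derivations kept by $\pi_1$ from the second-order operators it kills. The most delicate and error-prone contributions are the mixed ones, where the anchor part differentiates the structure functions $C(x)$ and its own coefficients $\rho(x)$ with respect to the bosonic coordinates, since these have no counterpart in either of the two model calculations and are precisely what couples the bracket and anchor data in the final equations.
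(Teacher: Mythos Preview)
Your proposal is correct and follows essentially the same route as the paper: compute $\partial\diamond\partial=\partial^2$ explicitly as a sum of a $\theta\theta\,\partial/\partial x$ piece and a $\theta\theta\theta\,\partial/\partial\theta$ piece, then apply $\partial\diamond(\,\cdot\,)$ once more and read off the two sectors separately. The paper's proof is only a sketch and records just the intermediate formula for $\partial\diamond\partial$ before asserting the final identities, so your decomposition $\partial=\partial_\rho+\partial_C$ and your explicit identification of which blocks feed which sector simply make the bookkeeping that the paper suppresses more transparent, without changing the underlying argument.
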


\begin{proof}
We sketch the rather long proof. For $\partial=
\rho^{i}_{\alpha}
\theta^{\alpha} \frac{\partial}{\partial x^{i}} + \frac{1}{2}
C_{\alpha\,\beta}^{\gamma} \theta^{\alpha}
\theta^{\beta} \frac{\partial}{\partial \theta^{\gamma}},$
we have
 \begin{equation*}
 \partial \diamond \partial=
 \left( \rho^{j}_{\beta}\, \dfrac{\partial \rho^{i}_{\gamma}}{\partial x^j} + \dfrac{1}{2}
\rho^{i}_{\alpha} \, C_{\beta \gamma}^{\alpha}\right) \theta^{\beta} \theta^{\gamma} \dfrac{\partial}{\partial x^{i}}
+ \left( \frac{1}{2}\rho^{i}_{\beta}\,
\dfrac{\partial C_{\lambda\,\mu}^{\gamma}}{\partial x^i} +
\frac{1}{2}\,C_{\alpha\,\beta}^{\gamma} \,C_{\lambda\,\mu}^{\alpha}\right)
\theta^{\lambda} \theta^{\mu} \theta^{\beta} \dfrac{\partial}{\partial
\theta^{\gamma}}.
\end{equation*}
As in the previous theorem one finds that the condition $\partial
\diamond (\partial
\diamond \partial) =0$ is equivalent to the following identities
\begin{eqnarray*}
& \left[ \dfrac{1}{2}\rho^{j}_{\nu} \dfrac{\partial}{\partial x^{j}} \left(\rho^{i}_{\beta} \dfrac{\partial C^{\gamma}_{\sigma\, \mu}}{\partial x^{i}} \right) + \dfrac{1}{2}\, \rho^{j}_{\nu} \dfrac{\partial ( C^{\gamma}_{\alpha\, \beta} C^{\alpha}_{\sigma\, \mu} )}{\partial x^{j}} + \dfrac{1}{2}\, \rho^{i}_{\beta} \dfrac{\partial C^{\gamma}_{\lambda \, \mu}}{\partial x^{i}} C^{\lambda}_{\mu \sigma} - \dfrac{1}{4}\,\rho^{i}_{\beta} C^{\beta}_{\nu \sigma} \dfrac{\partial C^{\gamma}_{\lambda \mu}}{\partial x^{i}} +\right. & \\
&  \left. +\left(\dfrac{1}{2}\, C^{\gamma}_{\alpha \beta}
C^{\alpha}_{\lambda \mu} C^{\lambda}_{\nu \sigma} -
\dfrac{1}{4}C^{\alpha}_{\beta \mu} C^{\gamma}_{\alpha \epsilon} C^{\epsilon}_{\nu \sigma}  \right)  \right] \theta^{\nu} \theta ^{\sigma} \theta^{\mu} \theta^{\beta} \dfrac{\partial }{\partial \theta^{\gamma}}=0 \,,     &
\end{eqnarray*}
and \begin{eqnarray*}
& \left[  \rho^{l}_{\gamma} \dfrac{\partial}{\partial x^{l}} \left( \rho^{j}_{\nu} \dfrac{\partial \rho^{i}_{\gamma}}{\partial x^{j}} \right) +\dfrac{1}{2} \left(  \rho^{l}_{\sigma} \dfrac{\partial}{\partial x^{l}} \left( \rho^{i}_{\alpha} C^{\alpha}_{\nu \gamma} \right)+ \rho^{j}_{\epsilon} \dfrac{\partial \rho^{i}_{\gamma}}{\partial x^{j}} C^{\epsilon}_{\sigma \nu}- \rho^{j}_{\gamma} \dfrac{\partial \rho^{i}_{\epsilon}}{\partial x^{j}} C^{\epsilon}_{\sigma \nu} \right) + \right. &\\
& \left. + \dfrac{1}{2}\, \rho^{i}_{\alpha} C^{\alpha}_{\beta
\gamma} C^{\beta}_{\sigma \nu}  \right] \theta ^{\sigma}
\theta^{\nu} \theta ^{\gamma} \dfrac{\partial}{\partial x^{i}} =0.
&
\end{eqnarray*}
\end{proof}

Needless to say further research is necessary in order to have a
better grasp of the meaning and applications of the notion of $N$
Lie algebroids. We expect that this approach will lead towards new
forms of infinitesimal symmetries, and for that reason alone it
should find applications in various problems in mathematical
physics. In our forthcoming work \cite{ACD0} we are going to
discuss some applications of $N$ Lie algebroids in the context of
Batalin-Vilkovisky algebras and the master equation.

\section*{Acknowledgment}

Thanks to Takashi Kimura, Juan Carlos Moreno and Jim Stasheff.

%----------------------------------

\noindent mangel@euler.ciens.ucv.ve, \ \ jcama@usb.ve, \ \ ragadiaz@gmail.com


\begin{thebibliography}{99}
\small

\bibitem{AK1} \textsc{V. Abramov}, \textsc{R. Kerner},
Exterior differentials of higher order and ther covariant
ge\-ne\-ra\-li\-za\-tion, { J. Math. Phys}. { 41} (8) (2000)
5598-5614.

\bibitem{AK2} \textsc{V. Abramov}, \textsc{R. Kerner}, On certain realizations of the
q-deformed exterior differential calculus, {Rep. Math. Phys.} {
43} (1999) 179-194.

\bibitem{ACD0} \textsc{M. Angel}, \textsc{J. Camacaro}, \textsc{R. D\'iaz}, Batalin-Vilkovisky algebras and $N$-complexes, in preparation.

\bibitem{AD} \textsc{M. Angel}, \textsc{R. D\'{\i}az}, N-differential graded
algebras, { J.  Pure  App. Alg.} {210} (3) (2007) 673-683.

\bibitem{AD2}  \textsc{M. Angel}, \textsc{R. D\'iaz}, $N$-flat connections,  in
S. Paycha, B. Uribe (Eds.), Geometric and Topological Methods for
Quantum Field Theory, { Contemp. Math.} {432}, Amer. Math. Soc.,
Providence, pp. 163-172, 2007.

\bibitem{AD3} \textsc{M. Angel}, \textsc{R. D\'{\i}az}.
On the q-analoque of the Maurer-Cartan equation, { Adv. Stud.
Contemp. Math.} { 12 }(2) (2006) 315-322.

\bibitem{AD4} \textsc{M. Angel}, \textsc{R. D\'{\i}az}, $A_\infty^N$-algebras, preprint,  {arXiv.math.QA/0612661}.

\bibitem{Ba} \textsc{N. Bazunova}, Construction of graded differential algebra with ternary
differential, in J. Fuchs, J. Mickelsson, Grigori Rozenblioum and
Alexander Stolin (Eds.), Noncommutative geometry and
representation theory in mathematical physics, { Contemp. Math.} {
391}, Amer. Math. Soc., Providence, pp. 1-9, 2005.

\bibitem{Ba2} \textsc{N. Bazunova}, Non-coordinate case of graded differential algebra with ternary differential,
{ J.  Nonlinear Math. Phys.} {13} (2006) 21-26.

\bibitem{jrcp06} \textsc{J. R. Camacaro}, Lie algebroid exterior algebra in gauge field theories, in { Groups, Geometry and Physics},
{ Monogr. Acad. Ci. Zaragoza} { 29},  Zaragoza, pp. 57-64, 2006.

\bibitem{jfc} \textsc{J.F. Cari\~{n}ena}, Lie groupoids and algebroids in
classical and quantum mechanics, in Symmetries in Quantum
Mechanics and Quantum Optics, Universidad de Burgos, Burgos, pp.
67-81, 1999.

\bibitem{awac} \textsc{A.C. da Silva}, \textsc{A. Weinstein}, Lectures on
geometrical models for noncommutative algebra, {Berkeley
Mathematical Lecture Notes} {10}, Amer. Math. Soc., Providence,
1999.

\bibitem{DV1} \textsc{M. Dubois-Violette}, Generalized differential spaces with
$d^N=0$ and the q-differential calculus, {Czech J. of Phys.} {46}
(1996) 1227- 1233.

\bibitem{DV2} \textsc{M. Dubois-Violette}, Generalized homologies for $d^N=0$ and
graded q-differential algebras, in M. Henneaux, J. Krasil'shchik,
A. Vinogradov (Eds.), Secondary Calculus and Cohomological
Physics, {Contemp. Maths.} {219}, Amer. Math. Soc., Providence,
pp. 69-79, 1998.

\bibitem{DV3} \textsc{M. Dubois-Violette}, Lectures on differentials,
generalized differentials and some examples related to theoretical
physics, in R. Coquereaux, A. Garcia, R. Trinchero (Eds.),Quantum
Symmetries in Theoretical Physics and Mathematics, {Contemp.
Maths.} {294}, Amer. Math. Soc., Providence, pp.  59-94, 2002.

\bibitem{DVK} \textsc{M. Dubois-Violette}, \textsc{R. Kerner}, Universal q-differential
calculus and q-analog of homological algebra, {Acta Math. Univ.
Comenianae} {LXV} (2) (1996) 175-188.

\bibitem{npl} \textsc{N. P. Landsman}, Lie groups and Lie algebroids in physics and noncommutative geometry,
{J. Geom.  Phys.} {56} (2006)  24-54.

\bibitem{Kap} \textsc{M.M. Kapranov}, On the q-analog of homological
algebra, preprint, { arXiv.q-alg/9611005}.

\bibitem{KaWa} \textsc{C. Kassel}, \textsc{M. Wambst}, Alg\`ebre homologique des
N-complexes et homologie de Hochschild aux racines de l'unit\'e, {
Publ. Res. Inst. Math. Sci. Kyoto University} {34} (2) (1998)
91-114.

\bibitem{KN3} \textsc{R. Kerner}, The cubic chessboard, { Class. Quantum Grav.} {14} (1997) A203-A225.

\bibitem{KN} \textsc{R. Kerner}, $\mathbb{Z}_3$-graded exterior differential calculus
and gauge theories of higher order, { Lett. Math. Phys.} {36}
(1996) 441-454.

\bibitem{KN2} \textsc{R. Kerner}, \textsc{B. Niemeyer}, Covariant q-differential calculus and its deformations at $q^N=1$,
{ Lett. Math. Phys.} { 45} (1998) 161-176.

\bibitem{kchm} \textsc{K. C. Mackenzie}, General Theory of Lie Groupoids and Lie Algebroids,
London Math. Soc. Lecture Note Series {213}, Cambridge Univ.
Press, Cambridge, 2005.

\bibitem{Ma} \textsc{W. Mayer}, A new homology theory I,
{Ann. of Math}. {43} (1942) 370-380.

\bibitem{Ma2} \textsc{W. Mayer}, {A new homology theory  II},
{Ann. of Math}. {43} (1942) 594-605.

\bibitem{Sit} \textsc{A. Sitarz}, {On the tensor product construction for q-differential
algebras}, {Lett. Math. Phys.} {44} (1998).

\bibitem{pra}  \textsc{J. Pradines}, {Th\'{e}orie de Lie pour les
groupo\"{\i}des diff\'{e}rentiables. Relations entre
propi\'{e}t\'{e}s locales et globales}, { C. R. Acad. Sci. Paris
S\'{e}r. I Math}. { 236} (1966) 907-910.

\bibitem{Zei} \textsc{D. Zeilberger}; {Closed forms (pun inteded!),}
in A tribute to Emil Grosswald: number theory and related
analysis, { Contemp. Math.} { 143}, Amer. Math. Soc., Providence,
pp. 579-607, 1993.

\end{thebibliography}
\end{document}